

\documentclass[12pt]{article} 
\usepackage{hyperref}
\usepackage{dsfont}
\hypersetup{colorlinks, citecolor=red, filecolor=black, linkcolor=blue, urlcolor=blue}


\usepackage{geometry} 
\geometry{a4paper} 

\usepackage{graphicx} 
\usepackage{fullpage}

\usepackage{array} 
\usepackage{multirow}
\usepackage{verbatim} 

\usepackage[shortlabels]{enumitem}



\usepackage{amsmath, amssymb,amsthm}
\usepackage{amsfonts}
\usepackage{tikz}
\newtheorem{theorem}{Theorem}[section]
\newtheorem{corollary}[theorem]{Corollary}
\newtheorem{lemma}[theorem]{Lemma}
\newtheorem{prop}[theorem]{Proposition}
\theoremstyle{definition}
\newtheorem{definition}{Definition}[section]
\newcolumntype{C}[1]{>{\centering\arraybackslash}p{#1}}

\theoremstyle{definition}

\theoremstyle{definition}
\newtheorem{example}{Example}
\theoremstyle{definition}

\newcommand{\vanish}[1]{} 
\newcommand{\PS}{\mathsf{PS}} 
\newcommand{\PF}{\mathsf{PF}} 
\newcommand{\IPS}{\mathsf{IPS}} 
\newcommand{\SPS}{\mathsf{SPS}} 



\title{On Increasing and Invariant Parking Sequences}

\author{ 
Ayomikun Adeniran\thanks{ayoijeng@math.tamu.edu} \ and \   Catherine Yan\thanks{cyan@math.tamu.edu}\\
Department of Mathematics, Texas A\&M University,\\
College Station, TX 77843}


\date{April 30, 2020}


\begin{document}
\maketitle
\abstract{The notion of parking sequences is a new generalization of parking functions introduced by 
Ehrenborg and Happ.  In the parking process defining the classical parking functions, instead of each car only taking one parking space,  we allow the cars to have different sizes  and each takes up a number of adjacent parking spaces after a trailer $T$ parked on the first $z-1$ spots. A preference sequence 
in which all the cars are able to park is called a parking sequence. 
In this paper, we study increasing parking sequences and count them  via bijections to lattice paths with right boundaries. Then we study 
two notions of invariance in parking sequences and present
various characterizations and enumerative results.  
}
 
\section{Introduction}
Classical parking functions were first introduced by Konheim and Weiss \cite{Weiss}. The original concept involves a linear parking lot with $n$ available spaces and $n$ labeled cars each with a pre-fixed parking preference. Cars enter one-by-one in order. 
Each car attempts to park in its preferred spot first. If a car found its preferred spot occupied, it would move towards the exit and take the next available slot. If there is no space
available, the car exits without parking. A \emph{parking function} of length $n$ is a preference sequence for the cars in which all cars are able to park (not necessarily in their preferred spaces).
A formal definition for parking functions can be stated as follows.

\begin{definition}\label{1.1}
Let $\vec{a} = (a_1, a_ 2, . . . , a_n)$ be a sequence of positive integers, and let $a_{(1)} \leq a_{(2)} \leq \cdots \leq a_{(n)}$ be the non-decreasing rearrangement of $\vec{a}$ . Then the sequence $\vec{a}$ is a parking function if and only if $a_{(i)} \leq i$ for all indices $i$. Equivalently, $\vec{a}$ is a  parking function if and only if  for all $i \in [n]$,
\begin{equation}\label{eq1}
\#\{j:a_j \leq i \} \geq i.
\end{equation}
\end{definition}

For example, the preference sequences (1, 2, 3, 4), (2, 1, 3, 4) or (1, 2, 4, 1) are 
all parking functions, while (2, 2, 4, 2) 
is not since it will have one car leave un-parked. 
It is well-known that the number of classical parking functions is $(n+1)^{n-1}$. An elegant proof by Pollak (see \cite{pollak}) uses a circle with $(n+1)$ spots where the parking functions are the preference sequences that could park all $n$ cars without using the $(n+1)$-th spot.

Definition 1.1 can be extended to define the notion of vector parking functions, or $\vec{u}$-parking functions. Let $\vec{u}$ be a non-decreasing sequence $(u_1,u_2,u_3,...)$ of positive integers. A $\vec{u}$-parking function of length $n$ is a sequence $(x_1, x_2,..., x_n)$ of positive integers whose non-decreasing rearrangement 
$x_{(1)} \leq x_{(2)} \leq \cdots  \leq x_{(n)}$
satisfies $x_{(i)} \leq u_i$. Equivalently, $(x_1, \dots, x_n)$ is a $\vec{u}$-parking function if and only if  for all $i \in [n]$, 
\begin{equation}\label{eq2}
\#\{j:x_j \leq u_i \} \geq i.
\end{equation}
Denote  by $\PF_n(\vec{u})$ 
the set of all  $\vec{u}$-parking functions of length $n$.
When $u_i=i$ we obtain the 
classical parking functions. 
When $u_i=a+b(i-1)$ for some $a, b\in \mathbb{Z}_+$, it is known that the number of $\vec{u}$-parking functions is $a(a+bn)^{n-1}$; see e.g. \cite{goncpoly}. 

The set of parking functions  is a basic object lying in the center of  combinatorics, with many  connections  and applications to other branches of mathematics and disciplines, such as storage problems in computer science, graph searching algorithms, interpolation theory, diagonal harmonics, and sandpile models. Because of their rich theories and applications, parking functions and their variations have been studied extensively in the literature. See \cite{yandiff} for a comprehensive survey on the combinatorial theory of parking functions. 

There is a particular generalization of parking functions that was recently introduced by Ehrenborg and Happ \cite{parkcars, parktrailer},  called parking sequences. Again, there are $n$ cars trying to park in a linear parking lot. In this new model the car $C_i$ has length $y_i \in \mathbb{Z}_+$ for each $i=1,2,...,n$. Call $\vec{y}=(y_1,y_2,...,y_n)$ the \emph{length vector}. There is a trailer $T$ of length $z-1$ parked at the beginning of the street after which the $n$ cars park with  car $C_i$ taking up $y_i$  adjacent parking spaces. Given a sequence $\mathbf{c}=(c_1, . . . , c_n) \in \mathbb{Z}_+^n$, for $i=1, 2, \dots, n$ the cars enter the street in order, and 
car $C _i$ looks for the first empty spot $j \geq c_i$. If  the spaces $j$ through $j + y_i - 1$ are all empty, then car $C_i$ parks in these spots. If $j$ does not exist or any of the spots $j + 1$ through $j + y_i - 1$ is already occupied, then there will be a collision and the car cannot park and has to leave the street. In this case, we say the parking fails.

\begin{definition}
Assume there are $z-1+\sum_{i=1}^n y_i$ parking spots along a street, with the first $z-1$ occupied by a trailer. The sequence $\mathbf{c}=(c_1, . . . , c_n)$ is called a \emph{parking sequence for $(\vec{y},z)$} where $\vec{y} = (y_1, . . . , y_n)$ if all $n$ cars can park without any collisions. We denote the set of all such parking sequences by $\PS(\vec{y};z)$.
\end{definition} 
For example, $\mathbf{c}=(3,7,5,3)$ is a parking sequence for $(\vec{y}; z)$ where  $\vec{y}=(1,2,2,3)$ and $z=4$. Figure~\ref{fig:my_label1} shows how the cars $C_1, \dots, C_4$ would park along the street with the reference sequence 
$\mathbf{c}$. 
As given in \cite{parktrailer}, the number of parking sequences in $\PS(\vec{y};z)$ is
\begin{equation}\label{parktrailer1}
    z\cdot (z+y_1 + n-1)\cdot(z+y_1 + y_2 + n - 2) \cdots (z+y_1 +\cdots + y_{n-1} + 1).
\end{equation}
\tikzstyle{vertex}=[rectangle,fill=black!15,minimum size=10pt,inner sep=0pt]
\begin{figure}[ht]
\centering
\def\boundb{(-2,0) grid (9,1)}
\begin{tikzpicture}[scale=0.75, auto,swap]
    \draw \boundb;
    \fill[black!55] (-1.9,0.1) rectangle ++(2.8,0.8);
    \fill[black!25] (1.1,0.1) rectangle ++(0.8,0.8);
    \fill[black!25] (2.1,0.1) rectangle ++(1.8,0.8);
    \fill[black!25] (4.1,0.1) rectangle ++(1.8,0.8);
    \fill[black!25] (6.1,0.1) rectangle ++(2.8,0.8);
   
    \foreach \pos/\name in {{(-0.5,0.5)/T},{(1.5,0.5)/C_{1}}, {(3,0.5)/C_{3}},{(5,0.5)/C_{2}},{(7.5,0.5)/C_{4}}}
        \node (\name) at \pos {$\name$};
    \foreach \pos/\name in {{(-1.5,-0.5)/1},{(-0.5,-0.5)/2},{(0.5,-0.5)/3},{(1.5,-0.5)/4},{(2.5,-0.5)/5},{(3.5,-0.5)/6},{(4.5,-0.5)/7},{(5.5,-0.5)/8},{(6.5,-0.5)/9},{(7.5,-0.5)/10},{(8.5,-0.5)/11}}
        \node (\name) at \pos {$\name$};
\end{tikzpicture}
\caption{$\mathbf{c}=(3,7,5,3)$ is a parking sequence for $\vec{y}=(1,2,2,3)$.}
\label{fig:my_label1}
\end{figure}

From \eqref{eq1} and \eqref{eq2} 
it is easy to see that any permutation of a $\vec{u}$-parking function is also a $\vec{u}$-parking function. This is however not true for parking sequences. Consider as an example a one-way street with 4 spots and 2 cars with fixed length vector $\vec{y}=(2,2)$ and $z=1$, (no trailer). Then, whereas $\mathbf{c}=(1,2)$ is a parking sequence for $(\vec{y};z)$, $\mathbf{c}'=(2,1)$ is not. Thus, it is natural to ask which parking sequence  $\mathbf{c}$ is invariant for $(\vec{y};z)$, that is, it is still a parking sequence for $(\vec{y};z)$ after 
the entries of $\mathbf{c}$  are  permuted.  Another question is which sequence remains a parking sequence when the cars enter the street in different orders. In other words, we want to know which preference 
sequence allows all the cars to park  when the length vector $\vec{y}$ is permuted to $(y_{\sigma(1)},y_{\sigma(2)}, ..., y_{\sigma(n)})$ for an arbitrary $\sigma \in \mathfrak{S}_n$. 

There are several basic notions and variations associated with parking functions and their generalizations. Usually, these notions lead to a study of special classes of parking functions that have some interesting property. One of such special classes is the set of \emph{increasing parking functions}, which have non-decreasing entries and are counted by the ubiquitous Catalan numbers. It is only natural to ask for a generalization of this class in the set of parking sequences. 

We study these questions in the present work. The rest of the paper is organized as follows. In section 2, we discuss increasing parking sequences and their connection to lattice paths. 
In section 3, we fix the length vector $\vec{y}$ and characterize all permutation-invariant parking sequences when $\vec{y}$ has some special characteristics. Then, in section 4, we characterize all parking sequences that remain valid 
for all permutations of $\vec{y}$.
We finish the paper with some closing remarks in section 5.


\section{Increasing Parking Sequences}
In this section, we consider all non-decreasing parking sequences for any given pair $(\vec{y};z)$. By convention, we write $[x]=\{1,2,...,x\}$ and the interval $[x,y]=\{x,x+1,...,y\}$, where $x,y \in \mathbb{Z}_+$ and $x<y$. Given any sequence $\mathbf{b}=(b_1, . . . , b_n) \in \mathbb{Z}_+^n$, let $\mathbf{b}_{inc}=(b_{(1)}, . . . , b_{(n)})$ be the non-decreasing rearrangement of the entries of $\mathbf{b}$ and the $i^{th}$ entry $b_{(i)}$ of $\mathbf{b}_{inc}$  is called the \emph{i-th order statistic} of $\mathbf{b}$. Next, we define the final parking configuration for any given parking sequence.
\begin{definition}
Let $\mathbf{c} \in \PS(\vec{y};z)$. 
The \emph{final parking configuration} of 
$\mathbf{c}$ is the arrangement of cars $C_1, C_2, ..., C_n$ following the trailer $T$ encoding their relative order on the street after they are done parking using the preference sequence $\mathbf{c}$.
\end{definition}
\noindent For example, in Figure \ref{fig:my_label1}, the final parking configuration of $\mathbf{c}=(3,7,5,3)$ is $T,C_1,C_3,C_2,C_4$.

The following inequalities analogous to \eqref{eq1} give  a necessary condition for being  a  parking sequence.  
\begin{lemma} \label{PSchar}
Suppose $\mathbf{c}=(c_1, . . . , c_n) \in \PS(\vec{y};z)$ where $\vec{y} = (y_1 , . . . , y_n)$.  Then, $\#\{ j \in [n]:c_j \leq z \} \geq 1$ and for each $1\leq t \leq n-1$,
\begin{equation}\label{eq4}
\#\{ j:c_j \leq z +\sum_{i=0}^{t-1}y_{(n-i)}\} \geq t+1.
\end{equation}
\end{lemma}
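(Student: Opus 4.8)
The plan is to exploit the fact that a valid parking sequence tiles the street perfectly, and then read off the inequalities from a single left-to-right scan. First I would record the structural observation that underlies everything: since $\mathbf{c} \in \PS(\vec{y};z)$, all $n$ cars park, so together they occupy exactly $\sum_{i=1}^n y_i$ spaces; as this equals the number of non-trailer spots $z, z+1, \dots, z-1+\sum_{i=1}^n y_i$, the cars must fill this interval with \emph{no gaps}. Consequently, if we order the cars by their (distinct) starting positions $z = p_{(1)} < p_{(2)} < \cdots < p_{(n)}$, each position is a cumulative sum, $p_{(m+1)} = p_{(m)} + (\text{length of the } m\text{-th car})$. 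I would also record the trivial but crucial inequality that whenever a car parks it takes the first free spot at or beyond its preference, so its starting position is always $\geq$ its preference.

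Write $S_t = \sum_{i=0}^{t-1} y_{(n-i)}$ for the sum of the $t$ largest car lengths (with $S_0 = 0$), so the threshold appearing in \eqref{eq4} is $z + S_t$. The key quantity I would track is $k = \#\{ j : \text{the starting position of } C_j \text{ is} \leq z + S_t\}$. Because each starting position dominates the corresponding preference, every such car satisfies $c_j \leq z + S_t$, and hence $\#\{j : c_j \leq z + S_t\} \geq k$. It therefore suffices to prove $k \geq t+1$, and then the whole lemma (including the case $t = 0$, which is $\#\{c_j \leq z\} \geq 1$) reduces to this single statement.

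To bound $k$ from below I would use the no-gap tiling. The $k$ cars starting at positions $\leq z + S_t$ are precisely the first $k$ cars in the left-to-right order, so they occupy a contiguous block $[z, q]$. If $k < n$, the next car starts at $q+1 > z + S_t$, forcing $q \geq z + S_t$; if $k = n$ the block reaches the last spot. In either case a short computation shows the total length of these $k$ cars is at least $S_t + 1$ (in the second case because $\sum_{i=1}^n y_i - S_t$ is a sum of $n-t \geq 1$ positive lengths). Since the total length of any $k$ cars is at most the sum of the $k$ largest lengths, we obtain $\sum(\text{$k$ largest lengths}) \geq S_t + 1 > S_t = \sum(\text{$t$ largest lengths})$. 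As the sum of the $m$ largest lengths is strictly increasing in $m$ (the lengths are positive), this forces $k > t$, i.e.\ $k \geq t+1$, as required.

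The step I expect to require the most care is exactly this last comparison, and in particular the appearance of the $t$ \emph{largest} lengths rather than the lengths read in some other order: to convert a lower bound on occupied length into a lower bound on the \emph{number} of cars one must invoke the extremal fact that few cars can cover a given length only when they are the big ones. The companion subtlety is the $+1$ that upgrades the occupied length from $S_t$ to $S_t+1$, which is precisely what separates ``$t$ largest lengths'' from ``$t+1$ cars'' and which the argument extracts uniformly from the no-gap condition (either via the car immediately following the block, or via the leftover small cars when $k=n$).
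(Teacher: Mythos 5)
Your proof is correct and uses essentially the same ingredients as the paper's: the final configuration has no gaps, a car's starting position dominates its preference, and any $t$ cars cover at most the sum of the $t$ largest lengths. The paper runs this as a one-step contradiction (if at most $t$ cars prefer spots $\leq z+S_t$, the interval $[1,z+S_t]$ contains at most $z-1+S_t$ occupied spots, leaving a gap), whereas you give the direct contrapositive; the content is the same.
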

\begin{proof}
We have  $\#\{ j:c_j \leq z \} \geq 1$ because otherwise, there is no car whose preference is less than or equal to $z$, thus no car parks on spot $z$  and we obtain a contradiction.  
Suppose for some $t \in [2, n-1]$, $\#\{ j:c_j \leq z +\sum_{i=0}^{t-1}y_{(n-i)}\} \leq t$. Then, in the final parking configuration on spots $[1,z+\sum_{i=0}^{t-1}y_{(n-i)}]$, there are at most 1 trailer and $t$ cars occupying a total of at most $z-1+y_{(n)}+y_{(n-1)}+\cdots+y_{(n-t+1)}$ spots. Thus, not all spots are used in the final parking configuration and this contradicts the fact that $\mathbf{c} \in \PS(\vec{y};z)$. 
\end{proof}

\vanish{
Setting $z=1$ in the above Lemma yields the following.
\begin{corollary}
Let $\mathbf{c}=(c_1, . . . , c_n) \in \PS(\vec{y})$. where $\vec{y} = (y_1 , . . . , y_n)$.
Then,\\ $\#\{ j \in [n]:c_j = 1 \} \geq 1$ and for each $1\leq t \leq n-1$,
\begin{equation}
\#\{ j:c_j \leq 1 +\sum_{i=0}^{t-1}y_{(n-i)}\} \geq t+1
\end{equation}
\end{corollary} 
}

\begin{corollary}\label{PScharcor}
Let $\mathbf{c}=(c_1, . . . , c_n) \in \PS(\vec{y};z)$ where $\vec{y} = (y_1 , . . . , y_n)$. 
Then $c_{(1)} \leq z$ and for $j=2, \dots, n$,
\begin{equation}\label{PSdec}
     c_{(j)} \leq z+ \sum_{i=0}^{j-2} y_{(n-i)}. 
\end{equation}
\end{corollary}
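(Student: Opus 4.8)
The plan is to read off the bounds on the order statistics directly from the counting inequalities of Lemma~\ref{PSchar}, using the elementary equivalence that for any threshold $M$ and any $k \in [n]$, the bound $\#\{j : c_j \le M\} \ge k$ holds if and only if $c_{(k)} \le M$. Indeed, if at least $k$ of the $c_j$ are $\le M$, then in particular the $k$-th smallest of all the entries, namely $c_{(k)}$, is $\le M$; conversely, $c_{(k)} \le M$ forces $c_{(1)}, \dots, c_{(k)}$ all to be $\le M$, producing at least $k$ entries below the threshold. I would state and use this equivalence as the single engine of the proof.

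With it in hand, the first inequality $\#\{j \in [n] : c_j \le z\} \ge 1$ from Lemma~\ref{PSchar} immediately yields $c_{(1)} \le z$ (the case $k=1$, $M=z$), which is the base case.

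For the remaining bounds I would set $t = j-1$ in \eqref{eq4}, so that as $j$ ranges over $\{2, \dots, n\}$ the index $t$ ranges over $\{1, \dots, n-1\}$, exactly the range covered by the lemma. Substituting $t = j-1$ gives $\sum_{i=0}^{t-1} y_{(n-i)} = \sum_{i=0}^{j-2} y_{(n-i)}$ and the count $t+1 = j$, so Lemma~\ref{PSchar} reads $\#\{j' : c_{j'} \le z + \sum_{i=0}^{j-2} y_{(n-i)}\} \ge j$. Applying the order-statistic equivalence with $M = z + \sum_{i=0}^{j-2} y_{(n-i)}$ and $k = j$ then delivers precisely \eqref{PSdec}.

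Since the corollary is essentially a repackaging of the lemma, there is no substantive obstacle. The only point requiring a moment of care is the reindexing $t = j-1$, together with the bookkeeping that ensures the counting-to-order-statistic equivalence is invoked at the correct index $k = j$ and over the correct range of $j$; verifying that these match is routine.
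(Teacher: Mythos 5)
Your proof is correct and is exactly the intended derivation: the paper states this corollary without proof precisely because it follows from Lemma~\ref{PSchar} via the standard equivalence $\#\{j : c_j \le M\} \ge k \iff c_{(k)} \le M$ (the same equivalence underlying \eqref{eq1} and \eqref{eq2}). Your reindexing $t = j-1$ and the range bookkeeping are both accurate.
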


We note that the conditions of Lemma \ref{PSchar} are not sufficient. Using the same example as before, even though $\mathbf{c}=(1,2)$ and $\mathbf{c}'=(2,1)$ both satisfy (\ref{eq4}) for $\vec{y}=(2,2)$, $\mathbf{c}' \not\in \PS(\vec{y};1)$. 
In addition, for a parking sequence $\mathbf{c} \in PS(\vec{y};z)$, its rearrangement $\mathbf{c}_{inc}$ 
is not 
necessarily a parking sequence. Consider the following example for $\vec{y}=(1,1,4)$ and $z=1$. $\mathbf{c}=(5,6,1) $ is in $\PS(\vec{y};z)$ but $\mathbf{c}_{inc}=(1,5,6)$ is not.

\begin{definition}
A sequence  $\mathbf{c}=(c_1, . . . , c_n) \in \PS(\vec{y};z)$ is an \emph{increasing parking sequence for $(\vec{y};z)$} if  $c_1\leq c_2 \leq \cdots \leq c_n$. 
We denote the set of all increasing parking sequences for $(\vec{y};z)$ by $\IPS(\vec{y};z)$.
\end{definition}

When $\vec{y} = (1 , 1, . . . , 1)$ and $z = 1$ (i.e. the trailer of length 0), Definition 2.2 leads to  the classical increasing 
parking functions, which are counted by the Catalan numbers. 
It is well-known that classical increasing parking functions of length $n$ are in one-to-one correspondence with Dyck paths of semilength $n$, which are lattice paths from $(0,0)$ to $(n,n)$ with strict right boundary $(1,2,...,n)$.  This result can be generalized to 
increasing parking sequences. 

First we show that an analog of \eqref{eq1} is enough to  characterize  increasing parking sequences.

\begin{prop}\label{PSchar2}
 Let $(\vec{y};z) = (y_1 , . . . , y_n;z)$.Then, $\mathbf{c}=(c_1, . . . , c_n) \in \IPS(\vec{y};z)$ if and only if $c_1 \leq c_2 \leq \cdots \leq c_n$ and 
 for all $i  \in [n]$, 
 \begin{equation} \label{ipf}
 c_i \leq z+ \sum_{j=1}^{i-1} y_j. 
 \end{equation}
\end{prop}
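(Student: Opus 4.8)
The plan is to show that a non-decreasing preference sequence forces the cars to park as contiguous blocks, from left to right, in their natural order $C_1, C_2, \ldots, C_n$, and then read off both directions of the equivalence from this structure. The whole argument is powered by a single induction, which I would isolate as a lemma: if $c_k \le z + \sum_{j=1}^{k-1} y_j$ holds for every $k \le m$, then after $C_1, \ldots, C_m$ have parked with no collision, car $C_k$ occupies exactly the block $[\,z + \sum_{j=1}^{k-1} y_j,\ z - 1 + \sum_{j=1}^{k} y_j\,]$, so that the trailer together with these $m$ cars fills the initial segment $[1,\ z - 1 + \sum_{j=1}^{m} y_j]$ with no gaps.

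To run the induction on $m$, I would assume the block structure for $C_1, \ldots, C_{m-1}$. Then the occupied spots are precisely the trailer $[1, z-1]$ together with $[z,\ p-1]$, where $p := z + \sum_{j=1}^{m-1} y_j$, and \emph{every} spot $\ge p$ is still empty. Since the hypothesis gives $c_m \le p$, the first empty spot at or beyond $c_m$ is exactly $p$: every position in $[c_m, p-1]$ is occupied by either the trailer or one of $C_1, \ldots, C_{m-1}$. The $y_m$ positions $p, p+1, \ldots, p + y_m - 1$ are all empty, so $C_m$ parks there without collision, extending the contiguous block by one car and closing the induction. Notice that this step uses only the inequality $c_m \le p$, not monotonicity.

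The \emph{if} direction is then immediate: applying the lemma with $m = n$ shows all $n$ cars park, and since $\mathbf{c}$ is non-decreasing, $\mathbf{c} \in \IPS(\vec{y};z)$. For the \emph{only if} direction I would argue by contradiction on the least index $i$ for which the inequality fails, so $c_i > z + \sum_{j=1}^{i-1} y_j$ while every earlier car meets its bound. The lemma applies to $C_1, \ldots, C_{i-1}$ and leaves the spot $p := z + \sum_{j=1}^{i-1} y_j$ empty, with every spot $\ge p$ empty. But $c_i \ge p + 1$, so $C_i$ searches strictly to the right of $p$ and skips it; and since $\mathbf{c}$ is non-decreasing, every later car has preference $c_k \ge c_i > p$ and also skips it. Hence spot $p$ is never filled. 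One checks $p \le z - 1 + \sum_{j=1}^{n} y_j$ using $y_n \ge 1$, so $p$ is a genuine position of the lot.

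The one substantive point — and the main obstacle — is converting ``spot $p$ stays empty'' into an actual contradiction. This rests on conservation of spots: the lot has exactly $z - 1 + \sum_{j=1}^{n} y_j$ positions, of which the trailer claims $z - 1$ and a successful parking of all cars claims the remaining $\sum_{j=1}^{n} y_j$, so a valid parking sequence fills the lot completely. A permanently empty interior spot therefore certifies that some car failed to park, contradicting $\mathbf{c} \in \IPS(\vec{y};z)$. Everything else reduces to the bookkeeping of the ``first empty spot at or beyond $c_m$'' computation, which the monotonicity of $\mathbf{c}$ keeps transparent.
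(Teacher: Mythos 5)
Your proposal is correct and follows essentially the same route as the paper: both arguments hinge on showing that a non-decreasing sequence satisfying \eqref{ipf} parks the cars in the gap-free standard order $T, C_1, \dots, C_n$, and then read off both implications from that configuration (the paper's converse argues directly that the final configuration must be standard and compares $c_i$ to the first spot $C_i$ occupies, while yours runs a minimal-counterexample argument showing spot $z+\sum_{j<i} y_j$ is never filled — the same mechanism). Your explicit induction lemma and the conservation-of-spots remark just make precise what the paper leaves as an observation.
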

\begin{proof}
Observe that 
if $\mathbf{c}$ is a non-decreasing preference sequence satisfying \eqref{ipf}, then the cars will park in the final configuration $T, C_1,\dots, C_n$. Hence 
$\mathbf{c}$ is in $\IPS(\vec{y};z)$. 

Conversely, for a non-decreasing sequence $\mathbf{c}$ that 
allows all the cars to park, we need to prove that it satisfies \eqref{ipf}. 
First by Corollary \ref{PScharcor}, $c_{1} \leq z$. Thus, car $C_1$ parks right after the trailer leaving no gap.  By the rules of the parking process, if $c_i \leq c_{i+1}$ and both cars $C_i$ and $C_{i+1}$
are able to park, 
then  $C_{i+1}$ will
park after $C_i$. Hence for a non-decreasing $\mathbf{c} \in \PS(\vec{y};z)$, the final parking  configuration must be $T, C_1, C_2, \dots, C_n$.  It follows that 
the first spot occupied by car $C_i$ is $z+y_1+ \cdots +y_{i-1}$,
which must be larger than or equal to $c_i$.  
\vanish{
Now, assuming car $C_{k-1}$ ($2\leq k\leq n$) has parked and there are no gaps in between the cars parked thus far on the street. For car $C_k$, we must have that $c_{k} \leq z+y_1+y_2+\cdots+y_{k-1}$, otherwise the $n-(k-1)$ cars remaining after car $C_{k-1}$ all have preferences greater than $r=z+y_1+y_2+\cdots+y_{k-1}$, hence no car parks on spot $r$ in the final parking arrangement, contradicting the fact that $\mathbf{c} \in \PS(\vec{y};z)$). Thus, $C_k$ parks right after $C_{k-1}$ leaving no gaps and the proof is done by induction.} 
\end{proof}

Proposition \ref{PSchar2} allows us to  
enumerate increasing parking sequences for any given length vector $\vec{y}$ and $z \in \mathbb{Z}_+$ using results in lattice path counting.  Recall that a \emph{lattice path} from $(0,0)$ to $(p,q)$ is a sequence of $p$ east steps and $q$ north steps. 
It can be represented by a sequence of non-decreasing 
integers $(x_1, x_2, \dots, x_q)$ such that the north steps are at $(x_i,i-1) \to (x_i,i)$, for $i=1,...,q$. The lattice path is said to have strict right boundary $(b_1,b_2,...,b_q)$ 
if   $0\leq x_i < b_i$ for all $1\leq i \leq q$.
 Let $\mathsf{LP}_{p,q}(b_1,b_2,...,b_q)$ denote the set of all lattice  paths from $(0,0)$ to $(p,q)$ with strict right boundary $(b_1, b_2, \dots, b_q)$. 
Figure \ref{fig:my_label} shows an example of a lattice path (2,3,3,7) from (0,0) to (8,4) with strict right boundary  $\vec{b}=(3,4,5,8)$.
\begin{figure}[ht!]
\centering
\begin{tikzpicture}
\draw (0,0) -- (8,0) -- (8,4) -- (0,4) -- (0,0);
\draw (0,1) -- (8,1);
\draw (0,2) -- (8,2);
\draw (0,3) -- (8,3);
\foreach \x in {1, ..., 7}
	{\draw (\x,0) -- (\x,4);}
\draw[black, line width = 0.99mm] (0,0) -- (2,0) -- (2,1) -- (3,1) -- (3,3) -- (7,3) -- (7,4) -- (8,4);
\foreach \x in {3,...,5}
	{\draw[blue, fill] (\x,\x -3) circle (0.1);}
\draw[blue, fill] (8,3) circle (0.1);
\node at (-0.3,-0.3) {$(0,0)$};
\node at (8.5,4.2) {$(8,4)$};
\node at (3,-0.3) {};
\node at (8.5, 3) {};
\end{tikzpicture}
\caption{A lattice path (2,3,3,7) with strict right boundary at $(3,4,5,8)$.}
\label{fig:my_label}
\end{figure}

We can represent increasing parking sequences in terms of lattice paths with strict right boundary as follows:
Let $(\vec{y};z) = (y_1 , . . . , y_n; z)$ and $M=z-1+y_1+y_2+\cdots +y_{n-1}+y_n$. Then by Proposition \ref{PSchar2}  there is a bijection from $\IPS(\vec{y};z)$ to the set of lattice paths from $(0,0)$ to $(M,n)$ with strict right boundary $(z,z+y_1, z+y_1+y_2,..., z+y_1+y_2+\cdots +y_{n-1})$.
(The  boundary is strict because in the lattice path, 
$x_i$ can be $0$ while in $\mathbf{c}\in \IPS(\vec{y}, z)$, $c_i \geq 1$. ) 
There are well-known determinant formulas to count the number of lattice paths with general boundaries, see, for example, Theorem 1 of  \cite[Chap.2]{mohanty}, which leads to the following determinant formula.  
\begin{corollary}\label{2.3.1}
Suppose $M=z-1+y_1+y_2+\cdots +y_{n-1}+y_n$. Then,
\begin{align*}
    \#\IPS(\vec{y};z) &= \# \mathsf{LP}_{M,n}(z,z+y_1, z+y_1+y_2,..., z+y_1+y_2+\cdots +y_{n-1}) \\
    &= \det \left[\binom{b_i}{j-i+1} \right]_{1\leq i,j \leq n}
\end{align*}
where $b_1=z$ and $b_i=z+y_1+y_2+\cdots +y_{i-1}$ for $i=2,...,n$.
\end{corollary}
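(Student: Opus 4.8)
The statement contains two equalities, and I would treat them separately. The first, $\#\IPS(\vec{y};z) = \#\mathsf{LP}_{M,n}(z, z+y_1, \ldots, z+y_1+\cdots+y_{n-1})$, is essentially the bijection already set up in the paragraph preceding the corollary, so my job is only to record it cleanly. By Proposition~\ref{PSchar2}, a non-decreasing $\mathbf{c}$ belongs to $\IPS(\vec{y};z)$ exactly when $1 \le c_i \le b_i$ for all $i$, where $b_1 = z$ and $b_i = z + y_1 + \cdots + y_{i-1}$. The shift $x_i = c_i - 1$ then sends such a $\mathbf{c}$ to a non-decreasing integer sequence with $0 \le x_i < b_i$, i.e.\ to the list of abscissae of the north steps of a path in $\mathsf{LP}_{M,n}(b_1,\ldots,b_n)$; since $c_i \le c_{i+1}$ iff $x_i \le x_{i+1}$ and the shift is invertible, this is a bijection. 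I would also note that $M$ does not enter the count: once the $x_i$ are chosen the path is determined up to its forced terminal east-run to abscissa $M$, and all that is used is the strict monotonicity $b_1 < b_2 < \cdots < b_n$ coming from $y_j \ge 1$.

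For the second equality, $\#\mathsf{LP}_{M,n}(b_1,\ldots,b_n) = \det[\binom{b_i}{j-i+1}]_{1\le i,j\le n}$, I would invoke the determinant formula for lattice paths bounded by a right boundary, Theorem~1 of \cite[Chap.~2]{mohanty}. The only real work is to match conventions: I must confirm that the reference's strict-boundary count, for north-step abscissae satisfying $0 \le x_i < b_i$, produces precisely this matrix, with the index shift $j-i+1$ and the upper argument $b_i$ (as opposed to $b_i-1$). I would pin this down by checking the two smallest cases against direct enumeration: for $n=1$ the determinant is $\binom{b_1}{1}=z$, which is the number of admissible values of $c_1$; for $n=2$ it is $b_1 b_2 - \binom{b_1}{2}$, which equals $\sum_{x_1=0}^{b_1-1}(b_2-x_1)$, the number of pairs $0\le x_1\le x_2$ with $x_1<b_1$ and $x_2<b_2$. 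Agreement in these cases fixes the conventions.

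Should a self-contained argument be preferred over the citation, I would derive the determinant by Lindstr\"{o}m--Gessel--Viennot, choosing sources and sinks so that the number of lattice paths between the $i$-th source and $j$-th sink is $\binom{b_i}{j-i+1}$; then the Lindstr\"{o}m determinant counts non-intersecting path families, and the standard correspondence between such families and weakly increasing bounded sequences recovers $\#\mathsf{LP}_{M,n}(b_1,\ldots,b_n)$. Alternatively one can expand along the last row using the recursion obtained by conditioning on $x_n$. In either route the main obstacle is not the algebra but the bookkeeping: getting the binomial indices and the strict-versus-weak boundary exactly right, which is why I would anchor the whole computation to the explicit small-$n$ checks above.
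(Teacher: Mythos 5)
Your proposal is correct and follows essentially the same route as the paper: the first equality is the bijection from Proposition~\ref{PSchar2} (shifting $c_i$ to $x_i=c_i-1$ so that the strict boundary $0\le x_i<b_i$ matches), and the second is the citation to the determinant formula in \cite[Chap.~2, Theorem~1]{mohanty}. The convention checks for $n=1,2$ and the optional Lindstr\"{o}m--Gessel--Viennot derivation are sensible additions but not a different argument.
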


For the special case that the length vector has constant entries, 
there are nicer closed formulae for the determinant. 
Specifically, when $\vec{y}=(k^n)=(k,k,\dots, k)$ and $M=z+kn-1$, 
$\mathsf{LP}_{M,n}(z,z+k, z+2k,..., z+(n-1)k)$ is the set of lattice paths from $(0,0)$ to $(z+kn-1, n)$  which never touch the line $x = z + ky$.  Using the formula (1.11) of \cite[Chap.1]{mohanty}, we have 
\begin{corollary}
 Suppose $(\vec{y},z)=((k^n);z)$ and $M=z+kn-1$. Then
\begin{align*}
\#\IPS(\vec{y};z) = \# \mathsf{LP}_{M,n}(z,z+k, z+2k,..., z+(n-1)k) &= \frac{z}{z+n(k+1)}\binom{z+n(k+1)}{n}. 
\end{align*}
\end{corollary}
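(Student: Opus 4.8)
The first equality is just the bijection between $\IPS(\vec y;z)$ and lattice paths established right before Corollary~\ref{2.3.1}, specialised to $\vec y=(k^n)$; so the real task is to evaluate $\#\mathsf{LP}_{M,n}(z,z+k,\dots,z+(n-1)k)$ in closed form. The plan is to view these as generalised ballot paths: by Proposition~\ref{PSchar2} they are exactly the lattice paths from $(0,0)$ to $(M,n)$ that stay strictly to the left of the line $x=z+ky$, a line of integer slope, and such counts are governed by the cycle lemma. This is precisely what the determinant-free formula (1.11) of \cite[Chap.~1]{mohanty} packages; below I outline the self-contained cycle-lemma route.

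I would encode a path as a word in the alphabet $\{E,N\}$ with $M=z+kn-1$ letters $E$ and $n$ letters $N$, weighting $E$ by $+1$ and $N$ by $-k$. Then the partial sum after a prefix equals (east steps so far)$\,-\,k\cdot$(north steps so far), and the right-boundary condition $x_i<z+k(i-1)$ at every north step becomes the requirement that every partial sum be at most $z-1$, i.e.\ strictly below $z$; the total weight of the whole word is $M-kn=z-1$. Adjoining one further $E$ produces words of length $m=(M+1)+n=z+n(k+1)$ with exactly $n$ letters $N$, total weight $z$, and every letter of weight $\le 1$. The Dvoretzky--Motzkin cycle lemma then guarantees that among the $m$ cyclic rotations of any such word exactly $z$ are admissible. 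Counting pairs (word, rotation) in two ways---$z\binom{m}{n}$ on one side, and $m$ times the number of admissible words on the other---gives $\#\mathsf{LP}_{M,n}(z,z+k,\dots,z+(n-1)k)=\frac{z}{m}\binom{m}{n}=\frac{z}{z+n(k+1)}\binom{z+n(k+1)}{n}$, as claimed.

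The step needing the most care is matching the two inequalities: the boundary condition is an upper bound on partial sums, whereas the cycle lemma is normally stated with a strict lower bound, so I must either reverse the word or pass to the complementary quantity $z-S_j$ (a ballot problem with head start $z$) and verify that the adjoined $E$ is forced into a single rotation position, so that augmentation followed by rotation is a genuine bijection onto the rotation classes. As a sanity check, $k=z=1$ yields $\frac{1}{2n+1}\binom{2n+1}{n}=C_n$, the Catalan number counting classical increasing parking functions. The quickest alternative, and likely the intended one, is simply to recognise the strict-right-boundary region of a line of integer slope and quote (1.11) of \cite[Chap.~1]{mohanty}, from which the closed form drops out directly.
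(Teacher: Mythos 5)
Your proposal is correct, and its first equality (the bijection to $\mathsf{LP}_{M,n}$ via Proposition \ref{PSchar2}) is exactly what the paper uses; but for the closed form the paper does nothing more than observe that these are the paths from $(0,0)$ to $(z+kn-1,n)$ never touching the line $x=z+ky$ and quote formula (1.11) of \cite{mohanty} --- precisely the ``quickest alternative'' you mention at the end. The bulk of your argument, the Dvoretzky--Motzkin cycle-lemma derivation, is therefore a genuinely different and self-contained route, and it is sound: weighting $E$ by $+1$ and $N$ by $-k$ does turn the strict right boundary into the condition that all partial sums be at most $z-1$ (the partial sums peak just before each north step, where they equal $x_i-k(i-1)$, and the terminal sum is automatically $z-1$); appending an $E$ and reversing converts this upper-bound condition into the strict lower-bound form of the cycle lemma, and the two-way count of (word, rotation) pairs gives $\frac{z}{m}\binom{m}{n}$ with $m=z+n(k+1)$. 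You correctly flag the inequality-reversal as the delicate point, and word reversal (sending partial sums $S_j$ to $z-S_{m-j}$) handles it cleanly; your sanity checks against the Catalan and Fuss--Catalan specializations also match Corollary \ref{Fuss}. What the paper's citation buys is brevity and uniformity with the general determinant formula of Corollary \ref{2.3.1}; what your argument buys is a proof from first principles that makes transparent why the answer has the cyclic form $\frac{z}{m}\binom{m}{n}$, in the spirit of Pollak's circle argument for $(n+1)^{n-1}$.
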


This specializes to the Fuss-Catalan numbers when  $z=1$. 
\begin{corollary} \label{Fuss}
Suppose $\vec{y}=(k,k,\dots, k) \in \mathbb{Z}_+^n$. Then
$$\#\IPS(\vec{y};1) = \frac{1}{kn+1}\binom{(k+1)n}{n}.$$
\end{corollary}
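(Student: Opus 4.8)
The plan is to obtain Corollary \ref{Fuss} as the $z=1$ specialization of the unnumbered corollary immediately preceding it, followed by a routine simplification of the resulting binomial coefficient. First I would invoke that corollary with $\vec{y}=(k^n)$ and $z=1$, which gives directly
\[
\#\IPS(\vec{y};1) = \frac{1}{1+n(k+1)}\binom{1+n(k+1)}{n}.
\]
All the combinatorial content — the bijection with lattice paths avoiding the line $x=z+ky$ and the cycle-lemma/reflection style count of formula (1.11) of \cite[Chap.1]{mohanty} — is already packaged into that statement, so nothing new needs to be proved about parking sequences.

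The remaining work is purely algebraic: to show that the right-hand side above equals the Fuss--Catalan number $\frac{1}{kn+1}\binom{(k+1)n}{n}$. Here I would set $N = n(k+1)$ and rewrite
\[
\frac{1}{1+N}\binom{1+N}{n} = \frac{(1+N)!}{(1+N)\,n!\,(1+N-n)!} = \frac{N!}{n!\,(1+N-n)!}.
\]
The key observation is that $1+N-n = 1+nk = kn+1$, so that $(1+N-n)! = (kn+1)(kn)!$ and $N-n = nk$. Factoring out the leading $(kn+1)$ and recognizing $\binom{N}{n} = \binom{(k+1)n}{n}$ then yields
\[
\frac{N!}{n!\,(kn+1)!} = \frac{1}{kn+1}\cdot\frac{N!}{n!\,(nk)!} = \frac{1}{kn+1}\binom{(k+1)n}{n},
\]
which is exactly the claimed count.

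I do not expect a substantive obstacle, since the statement is a direct specialization; the only point demanding care is the factorial bookkeeping. Specifically, one must track that the ``$+1$'' in the upper index $1+N$ and the lower index $n$ conspire so that a single factor of $(1+N)$ cancels and a single factor of $(kn+1)$ is extracted, and that the lower factorial $1+N-n$ collapses to $kn+1$ rather than something off by one. Getting these two shifts to line up is what produces the correct Fuss--Catalan normalization.
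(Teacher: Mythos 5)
Your proposal is correct and matches the paper's approach exactly: the paper obtains Corollary \ref{Fuss} as the $z=1$ specialization of the immediately preceding corollary, and your factorial bookkeeping (cancelling the factor $1+N$ and extracting $kn+1$ from $(1+N-n)!$) correctly supplies the algebraic simplification that the paper leaves implicit.
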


\vanish{
When $\vec{y} = (1 , 1, . . . , 1)$ and $z = 1$, the increasing parking sequences are exactly  the  classical increasing 
parking functions, which are counted by the Catalan numbers. It is well-known that classical increasing parking functions of length $n$ are in one-to-one correspondence with Dyck paths of semilength $n$, which are lattice paths from $(0,0)$ to $(n,n)$ with strict right boundary $(1,2,...,n)$. 
Hence  Corollaries \ref{2.3.1}--\ref{Fuss} generalize the result in the classical case.  }  


\section{Invariance for Fixed Length Vector}
In this section, we study the first of two types of invariance for parking sequences. 
Fixing the length vector $\vec{y}\in \mathbb{Z}_+^n$ and a positive integer $z$,  we investigate which parking sequence remains in the set $\PS(\vec{y};z)$ after its entries are 
arbitrarily rearranged.   

\begin{definition}
Fix $\vec{y}=(y_1 , . . . , y_n)$ and $z \in \mathbb{Z}_+$.
Let $\mathbf{c} \in \PS(\vec{y},;z)$. 
We say that  $\mathbf{c}$ is a \emph{permutation-invariant parking sequence for $(\vec{y};z)$} if for any rearrangement $\mathbf{c}'$ of $\mathbf{c}$, we have $\mathbf{c}' \in PS_{n}(\vec{y};z)$. We  denote the set of all permutation-invariant parking sequences for $(\vec{y};z)$ by $\PS_{inv}(\vec{y};z)$.
\end{definition}
\noindent For example, for $\vec{y}=(1,2)$,  we have
$\PS(\vec{y};1)=\{(1,1), (1,2),(3,1)\}$ and  $\PS_{inv}(\vec{y};z)=\{(1,1)\}$.
First, we describe a  subset of the invariant parking sequences.

\begin{prop}\label{minimal}
For any $\mathbf{c}=(c_1,...,c_n) \in [z]^n$, we have $\mathbf{c} \in \PS_{inv}(\vec{y};z)$.
\end{prop}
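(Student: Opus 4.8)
The plan is to reduce the permutation-invariance claim to a single parking assertion and then verify that assertion by a direct induction on the cars.

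First I would observe that the claim is genuinely about one set of sequences: any rearrangement $\mathbf{c}'$ of a sequence $\mathbf{c}\in[z]^n$ is again an element of $[z]^n$, since rearranging the entries changes neither their multiset nor the bound that each is at most $z$. Hence it suffices to prove the single statement that \emph{every} $\mathbf{c}\in[z]^n$ lies in $\PS(\vec{y};z)$. Granting this, applying it to $\mathbf{c}$ itself gives membership in $\PS(\vec{y};z)$, and applying it to each rearrangement $\mathbf{c}'$ shows every rearrangement is also a parking sequence, which is exactly the definition of $\mathbf{c}\in\PS_{inv}(\vec{y};z)$.

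To establish that every $\mathbf{c}\in[z]^n$ parks all its cars, I would run the parking process and prove by induction on $k$ (with $0\le k\le n$) the invariant that after cars $C_1,\dots,C_k$ have parked they occupy exactly the contiguous block of spots $[1,\,z-1+\sum_{i=1}^{k}y_i]$ with no gaps, so that the first empty spot is $z+\sum_{i=1}^{k}y_i$. The base case $k=0$ is just the trailer occupying $[1,z-1]$, leaving $z$ as the first empty spot. For the inductive step, suppose the invariant holds after $k-1$ cars, so the global first empty spot is $r=z+\sum_{i=1}^{k-1}y_i$ and all spots below $r$ are occupied. Car $C_k$ seeks the first empty spot at or beyond $c_k$; since $c_k\le z\le r$, every spot in $[c_k,\,r-1]$ is occupied, so that first empty spot is exactly $r$. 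The invariant also tells us the spots from $r$ onward form one contiguous empty block, which has room for the $y_k$ spaces car $C_k$ needs; thus $C_k$ parks in $[\,r,\,r+y_k-1\,]$ with no collision and no gap, extending the occupied block to $[1,\,z-1+\sum_{i=1}^{k}y_i]$ and preserving the invariant. Taking $k=n$ shows all $n$ cars park, so $\mathbf{c}\in\PS(\vec{y};z)$.

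I do not expect a serious obstacle here: the statement is intuitively transparent because a preference of at most $z$ always lands a car on the current first empty spot. The only point requiring care is the bookkeeping of the induction, namely keeping track that the occupied region stays contiguous with no gaps, since it is precisely this no-gap property (rather than any ordering or any feature of $\vec{y}$) that guarantees the absence of collisions and makes the argument insensitive to the order in which the cars arrive.
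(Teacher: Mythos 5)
Your proof is correct and follows essentially the same route as the paper: the paper likewise observes that every $\mathbf{c}\in[z]^n$ yields the final configuration $T,C_1,\dots,C_n$ and that the condition $c_i\le z$ is order-independent. You merely make explicit, via induction, the no-gap invariant that the paper leaves as an observation.
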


\begin{proof}
For any preference  sequence $\mathbf{c}$, if $c_i \leq z$  for all $i$,  then we obtain the final parking configuration $T,C_1,C_2,...,C_n$, 
which means $\mathbf{c} \in \PS(\vec{y};z)$. 
Since the condition $c_i \in [z]$ for all $i$ 
does not depend on the order of $c_i$, 
 we have  $\mathbf{c}$ is permutation-invariant.
\end{proof}

In general, $\PS_{inv}(\vec{y};z)$ is larger than the set $[z]^n$, and the situation can be more complicated. The following two examples show that $\PS_{inv}(\vec{y};z)$ depends not only on
 the relative order of the $y_i$'s, but also on the difference of $y_i$'s. 
 
\begin{example}\label{1.1}
 Let $\vec{y}=(y_1,y_2)$ and $z=1$. 
 If $y_1 < y_2$, then $\PS_{inv}(\vec{y};1)=\{(1,1)\}$. 
 On the other hand, if $y_1 \geq y_2$, we have $\PS_{inv}(\vec{y};1)=\{(1,1), (1,y_2+1), (y_2+1,1)\}$.
\end{example}

\vanish{
\noindent Example \ref{1.1} shows $\PS_{inv}(\vec{y})$ depends on relative order of the $y_i$'s. In addition, it also depends on the size of the $y_i$'s. To see this, consider the following examples of 3 cars with different length vectors. }

\begin{example}
Suppose $\vec{y}=(4,3,2)$ and ${\vec{t}}=(4,3,1)$. It is easy to check that $$\PS_{inv}(\vec{y};1)=\{(1,1,1),(1,1,4),(1,4,1),(4,1,1)\}$$ and $$\PS_{inv}(\vec{t};1)=\{(1,1,1),(1,1,4),(1,4,1),(4,1,1),(1,1,5), (1,5,1), (5,1,1)\}.$$ Note that the relative orders for the 
vectors $\vec{y}$ and $\vec{t}$ are the same, (both have the pattern $321$), 
but the invariant sets are not similar.
\end{example} 

In the following
we characterize the invariant set for some families of $\vec{y}$. First, we consider the case where the length vector is strictly increasing. Next, we look at the case where $\vec{y}$ is a constant sequence.
Lastly, given $a,b \in \mathbb{Z}_+$, we consider two cases where the length vector is of the form (i) $\vec{y}=(a,...,a,b,...,b)$ where $a<b$ and (ii) $\vec{y}=(a,...,a,b,...,b)$ where $b=1$ and $a>b$.


\subsection{Strictly increasing length vector}

When $\vec{y}$ is a strictly increasing sequence, we show that Proposition \ref{minimal} gives all the permutation-invariant parking sequeneces.

\begin{theorem}\label{3.3}
Let $(\vec{y};z) = (y_1,y_2, . . . , y_n; z)$ where $y_1 < y_2 < \dots < y_n$. Then, $$\PS_{inv}(\vec{y};z)=[z]^{n}.$$
\end{theorem}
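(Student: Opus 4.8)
The containment $[z]^n \subseteq \PS_{inv}(\vec y;z)$ is already supplied by Proposition \ref{minimal}, so the plan is to prove only the reverse containment $\PS_{inv}(\vec y;z) \subseteq [z]^n$. Equivalently, I will show that if $\mathbf c \in \PS_{inv}(\vec y;z)$ then its largest order statistic satisfies $c_{(n)} \le z$, which forces every entry into $[z]$. I argue by contradiction: assume $\mathbf c$ is invariant but $v := c_{(n)} > z$, and exhibit one rearrangement of $\mathbf c$ that fails to park, contradicting invariance.

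The first step is to extract the usable consequence of invariance. Since every rearrangement of $\mathbf c$ is a parking sequence, in particular the non-decreasing rearrangement $\mathbf c_{inc}$ lies in $\PS(\vec y;z)$; being non-decreasing, Proposition \ref{PSchar2} then yields the full family of inequalities $c_{(i)} \le z + y_1 + \cdots + y_{i-1}$ for all $i \in [n]$. Taking $i=n$ gives $v - z \le y_1 + \cdots + y_{n-1}$. This lets me define $j$ to be the smallest index with $v - z \le y_1 + \cdots + y_j$; the bound just derived guarantees $1 \le j \le n-1$, so in particular a car $C_{j+1}$ exists.

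Next I build the offending rearrangement. I assign the $j-1$ smallest preferences $c_{(1)} \le \cdots \le c_{(j-1)}$ to $C_1,\dots,C_{j-1}$ in this order, assign the maximum $v$ to $C_j$, and distribute the remaining preferences $c_{(j)},\dots,c_{(n-1)}$ arbitrarily among $C_{j+1},\dots,C_n$. Because the inequalities from Proposition \ref{PSchar2} hold for every $i \le j-1$, the non-decreasing block $c_{(1)},\dots,c_{(j-1)}$ lets $C_1,\dots,C_{j-1}$ park with no gaps, exactly filling $[z,\,q-1]$ with $q = z + y_1 + \cdots + y_{j-1}$ (the forward direction of Proposition \ref{PSchar2} applied to these first $j-1$ cars, which park before $C_j$ ever enters). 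By minimality of $j$ we have $v > q$, so $C_j$ parks at $[v,\,v+y_j-1]$ and leaves a gap $[q,\,v-1]$ of size $s = (v-z) - (y_1 + \cdots + y_{j-1})$. The definition of $j$ forces $1 \le s \le y_j$, and here the hypothesis that $\vec y$ is strictly increasing is decisive: $s \le y_j < y_{j+1} \le y_i$ for every $i > j$. Hence every car entering after $C_j$ is strictly longer than the gap and can never occupy a spot of $[q,v-1]$ without overrunning the occupied spot $v$. Thus the gap can never be filled; no matter how the remaining preferences are assigned, either a later car collides or an empty spot survives to the end, so the rearrangement is not a parking sequence. This contradicts invariance, forcing $c_{(n)} \le z$ and hence $\mathbf c \in [z]^n$.

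The conceptual core — and the step to get right — is the construction of this permutation: one must pick the precise car $C_j$ on which to drop the oversized preference $v$, namely the one for which, after the first $j-1$ cars pack tightly, the induced gap has size at most $y_j$ and therefore strictly less than every later car length. Establishing that the first $j-1$ cars truly pack without gaps is where invariance is spent (through $\mathbf c_{inc} \in \PS(\vec y;z)$ and Proposition \ref{PSchar2}), and the strict inequality $y_j < y_{j+1}$ is exactly what makes the gap unfillable. The remaining bookkeeping — that $C_j$ itself fits, i.e. $v + y_j - 1 \le M$, and the degenerate cases $j=1$ and $n=1$ — is routine and I would check it at the end.
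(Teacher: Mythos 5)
Your proof is correct and follows essentially the same route as the paper's: extract the order-statistic bounds from $\mathbf{c}_{inc}$ via Proposition \ref{PSchar2}, then insert one oversized preference immediately after a tightly packed prefix of the $j-1$ smallest entries, so that the resulting gap has size between $1$ and $y_j$ and hence cannot be filled by any of the strictly longer later cars. The only cosmetic difference is that you place the \emph{largest} entry $c_{(n)}$ in the gap-creating position (so you must invoke the forward direction of Proposition \ref{PSchar2} to see that the prefix packs without gaps), whereas the paper uses the \emph{smallest} entry exceeding $z$, which makes every prefix entry at most $z$ and the tight packing immediate.
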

\begin{proof}
By Proposition \ref{minimal}, $[z]^{n} \subseteq \PS_{inv}(\vec{y};z)$.
Conversely, suppose $\mathbf{c}=(c_1,c_2,...,c_n)$ is a parking sequence for $(\vec{y};z)$ with some $c_i \not\in [z]$. We claim that $\mathbf{c}$ is not permutation-invariant.
To see this, let $x= \min \{c_i \in \mathbf{c}\,\,| c_i > z \}$. Then, we can consider  
$\mathbf{c}_{inc}=  (c_{(1)},c_{(2)},...,c_{(r)},x,c_{(r+2)}, \dots,c_{(n)})$, where $c_{(1)} \leq c_{(2)} \leq \cdots \leq c_{(r)} \leq z < x \leq c_{(r+2)} \leq \cdots \leq c_{(n)}$  and $r\geq 1$ by Corollary \ref{PScharcor}.  Then, by Proposition \ref{PSchar2}, $x$ satisfies the inequality: $z < x \leq z+\sum_{i=1}^{r}y_i$. Thus, we can choose the maximum $s$ such that $x > z+ \sum_{i=1}^{s}y_i$, where 
 $0\leq s<r$. Consider the preference $$\mathbf{c'}=  (c_{(1)},c_{(2)},...,c_{(s)},x,c_{(s+1)}, \dots, c_{(r)},c_{(r+2)}, \dots,c_{(n)})$$ We try to park according to $\mathbf{c'}$. Clearly, the first $s$ cars park in order after the trailer T without any gaps in between them. Then, the car $C_{s+1}$ has preference $x$ and parks after car $C_s$ with $h$ unoccupied spots in between $C_s$ and $C_{s+1}$, where $h=x - (z+ \sum_{i=1}^{s}y_i) \geq 1$  and $h \leq y_{s+1}$ by the maximality of $s$. Among the un-parked cars $C_{s+2},  \dots, C_n$, the minimal length is $y_{s+2}$, where 
 $y_{s+2} > y_{s+1} \geq h$.  Thus no car can fill in these $h$ unoccupied spots. It follows that
  $\mathbf{c'} \not\in \PS(\vec{y};z)$, and hence
 $\mathbf{c} \not\in \PS_{inv}(\vec{y};z)$.
\end{proof}
\begin{corollary}
Let $(\vec{y};z) = (y_1,y_2, . . . , y_n; z)$ where $y_1 < y_2 < \dots < y_n$. Then, $$\#\PS_{inv}(\vec{y};z)=z^{n}.$$
\end{corollary}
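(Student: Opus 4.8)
The plan is to read this cardinality off directly from the set identity already established in Theorem~\ref{3.3}. That theorem asserts that, under the hypothesis $y_1 < y_2 < \cdots < y_n$, the permutation-invariant parking sequences are \emph{exactly} the sequences in $[z]^n$. So I would simply substitute this equality and thereby reduce the entire problem to counting the elements of $[z]^n$.

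The one computation I would then carry out is the count itself. By definition, $[z]^n$ is the set of sequences $(c_1, \ldots, c_n)$ in which each coordinate $c_i$ ranges over the $z$-element set $[z] = \{1, 2, \ldots, z\}$. Since the $n$ coordinates are chosen independently, the product rule gives $\#[z]^n = z^n$. Feeding this into the identity of Theorem~\ref{3.3} yields $\#\PS_{inv}(\vec{y};z) = z^n$, as desired.

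I do not expect any genuine obstacle here. All of the substantive work already lives in Theorem~\ref{3.3}, whose harder half shows that a strictly increasing length vector forbids any permutation-invariant preference sequence from using a value larger than $z$, while the easier half (the containment $[z]^n \subseteq \PS_{inv}(\vec{y};z)$) is exactly Proposition~\ref{minimal}. Once those two inclusions are combined into the equality $\PS_{inv}(\vec{y};z) = [z]^n$, the corollary amounts to nothing more than the elementary enumeration of a Cartesian product, so a one-line proof invoking Theorem~\ref{3.3} suffices.
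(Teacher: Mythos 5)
Your proposal is correct and matches the paper's (implicit) reasoning exactly: the corollary is an immediate consequence of Theorem~\ref{3.3}, since $\PS_{inv}(\vec{y};z)=[z]^n$ and $\#[z]^n=z^n$ by the product rule. The paper offers no separate proof precisely because this one-line deduction is all that is needed.
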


\subsection{Constant length vector }
In this subsection, we investigate the case where $\vec{y}$ is of the form $(k^n)=(k,k, . . . , k)$. 
\begin{theorem}\label{Inv}
Suppose $(\vec{y};z) = ((k^n); z)$ where $k\in \mathbb{Z}_+$ and $k>1$. Then, $\PS_{inv}(\vec{y};z)$ is the set of all sequences $(c_1,...,c_n)$ such that for each $1\leq i\leq n$, 
\begin{enumerate}[(i)]
\item $c_{(i)} \leq z+(i-1)k$, and 
\item  $c_i \in \{1,2,...,z,z+k,z+2k,...,z+(n-1)k\}$.
\end{enumerate} 
\end{theorem}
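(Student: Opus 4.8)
The plan is to reduce the length-$k$ parking process to a classical parking process on $n$ abstract ``slots'' whenever every preference is slot-aligned, and then read off both conditions from this reduction. Partition the $nk$ usable spots into slots $S_0,\dots,S_{n-1}$, where $S_m=[z+mk,\,z+(m+1)k-1]$, and set $S=\{1,\dots,z\}\cup\{z+k,z+2k,\dots,z+(n-1)k\}$. Observe first that both (i) and (ii) depend only on the multiset of entries of $\mathbf{c}$, so the characterization is automatically permutation-symmetric, as any description of an invariant set must be. Observe next that (i) is exactly Corollary \ref{PScharcor} specialized to $\vec y=(k^n)$ (each $y_{(n-i)}=k$, so the bound reads $c_{(j)}\le z+(j-1)k$); hence (i) already holds for every $\mathbf c\in\PS(\vec y;z)$, and in particular for every invariant $\mathbf c$.

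The structural lemma I would prove is that if every entry of a preference sequence lies in $S$, then throughout the parking process the occupied region is always a union of whole slots. This goes by induction on the number of parked cars: if the occupied set is a union of full slots and the next car has an aligned preference $c_i$ (set $d_i=0$ when $c_i\le z$ and $d_i=m$ when $c_i=z+mk$), then the first empty spot $\ge c_i$ is the left end of the first empty slot $S_{m'}$ with $m'\ge d_i$, which is an empty block of length exactly $k$; the car fills $S_{m'}$ and the invariant is preserved, while the car fails precisely when no empty slot of index $\ge d_i$ remains. Thus the length-$k$ process with preferences in $S$ is isomorphic to the classical parking process on $n$ spots with preference vector $(d_1+1,\dots,d_n+1)$. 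For sufficiency, assume $\mathbf c$ satisfies (i) and (ii); then $(d_1+1,\dots,d_n+1)$ has all entries in $[n]$, and by (i) its nondecreasing rearrangement satisfies $a_{(j)}\le j$, so it is a classical parking function in the sense of \eqref{eq1}. Since any rearrangement of a classical parking function is again one, every rearrangement of $\mathbf c$ maps to a parking function and hence parks, giving $\mathbf c\in\PS_{inv}(\vec y;z)$.

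For necessity I must show an invariant sequence contains no ``interior'' value, i.e.\ no entry $v>z$ with $v\neq z+mk$ for all $m$. Suppose $\mathbf c$ is invariant and some entry equals $v=z+mk+r$ with $1\le r\le k-1$. Since (i) gives $v\le z+(n-1)k$ and $r\ge1$, one checks $m\le n-2$ and that $v$ sits in sorted position $\ge m+2$, so the $m$ smallest entries $c_{(1)}\le\cdots\le c_{(m)}$ satisfy $c_{(j)}\le z+(j-1)k$ and are all strictly less than $v$. I then build the rearrangement placing these $m$ cars first in increasing order, then the car of preference $v$, then the remaining $n-m-1$ cars in any order. The first $m$ cars fill $S_0,\dots,S_{m-1}$; the car of preference $v$ then parks at $[v,v+k-1]$, straddling $S_m$ and $S_{m+1}$ and leaving the $r$ spots $[z+mk,z+mk+r-1]$ trapped between a filled slot and this car. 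This isolated gap has size $r<k$, so no later length-$k$ car can use it; consequently the remaining $(n-m-1)k$ spots contain only $(n-m-1)k-r$ usable contiguous spots at the far end, enough for just $n-m-2$ cars, and the final car fails. Hence this rearrangement is not in $\PS(\vec y;z)$, contradicting invariance, so every entry lies in $S$, which is (ii).

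The main obstacle is the necessity construction: I must select the first $m$ cars so that $S_0,\dots,S_{m-1}$ are filled exactly (this is where condition (i) on the small order statistics enters) and then verify carefully that the size-$r$ gap created by $v$ is genuinely un-fillable and forces a failure no matter how the remaining cars are ordered. By contrast, the derivation of (i) and the entire sufficiency direction fall out almost immediately once the slot reduction and its equivalence with classical parking are in place.
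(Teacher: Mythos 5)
Your proposal is correct and follows essentially the same route as the paper: the key lemma in both is that slot-aligned preferences keep the occupied region a union of full length-$k$ slots, condition (i) guarantees a free slot always exists ahead, and necessity is forced by exhibiting a rearrangement that strands a gap of size $r$ with $1\leq r<k$ (the paper simply fronts the offending entry, which makes your preliminary step of filling $S_0,\dots,S_{m-1}$ unnecessary, though your version is also valid). Your repackaging of sufficiency as an isomorphism with the classical parking process, so that invariance follows from the permutation-invariance of $\vec{u}$-parking functions, is exactly the bijection the paper uses afterwards in Corollary~\ref{Constant-count} for the enumeration, so it is a clean but not substantively different organization of the same argument.
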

\begin{proof}
Let $\mathbf{c}=(c_1,...,c_n)$ be a sequence that  for each $1 \leq i \leq k$, $c_{(i)} \leq z+(i-1)k$, and 
$c_i\leq z \text{ or } c_i=z+sk \text{ for some } s=0,1,...n-1$. We claim that $\mathbf{c} \in \PS(\vec{y}; z)$. 
Since these conditions  are independent of the arrangement of the terms $c_i$'s, this would implies $\mathbf{c}$  is permutation-invariant.

We attempt to park using $\mathbf{c}$. 
First, $C_1$ either parks right after the trailer if $c_1 \leq z$, or on spots $[c_1,c_1+k-1]$ if $c_1=z+sk$ for some $s \geq 0$.  We assume for our inductive hypothesis, that the first $r$ cars are parked already, (where $1\leq r \leq n-1$), and the following observations hold true at this stage in the parking process:
\begin{enumerate}
    \item Any car already parked on the street occupies spots of the form $[z+ks, z+k(s+1)-1]$ where $s \in \{0,1,...,n-1\}$
    \item For any maximal interval of unoccupied spots,  the length is a multiple of $k$ and the interval starts at $z+km$ for some $m \in \{0,1,...,n-1\}$.
\end{enumerate}
Now car $C_{r+1}$ comes with preference $c_{r+1}=z+kl$. There are two possibilities:
\begin{itemize}
    \item if spot $(z+kl)$ is empty, then $C_{r+1}$ parks on spots $[z+kl, z+k(l+1)-1]$.
    \item if spot $(z+kl)$ is non-empty, then $C_{r+1}$ drives forward to park in the first open interval ahead. Such an open interval must exist. Otherwise, assume that the last open spot after $C_r$ parked is $x$. By inductive hypothesis, $x=z+sk-1$ for some $s< l$. So all the spots from $x$ to the end of the street $(z+nk-1)$ are occupied, by $n-s$ cars. These $n-s$ cars, as well as $C_{r+1}$, all have preference at least $x$. 
    In other words, there are at least $n-s+1$ cars having preference $c_i \geq z+sk$, which implies $c_{(s)} \geq z+sk$, a contradiction. 
\end{itemize}
This exhausts all possible cases for $C_{r+1}$. Thus, by induction, all cars can park and $\mathbf{c} \in \PS(\vec{y};z)$.  

Conversely, suppose for a contradiction that there is a parking sequence  $\mathbf{c} \in \PS_{inv}(\vec{y};z)$ not satisfying Condition (ii). 
(By Corollary \ref{PScharcor} Condition (i) holds for 
all $\mathbf{c} \in \PS(\vec{y};z)$.) 
Then, there is some $j \in [n]$ such that $c_{j}=z+sk+t$ for some $s \in \{0,1,...n-1\}$ and $1\leq t <k$. Consider the following rearrangement of $\mathbf{c}$ given by
$\mathbf{c}'=(c_{j},c_{1},c_{2},...,c_{j-1},c_{j+1},...,c_{n})$. By our assumption, $\mathbf{c}' \in \PS(\vec{y};z)$. We attempt to park using this preference. First, $C_1$ parks on $[z+sk+t,z+(s+1)k+t-1]$. However, between the trailer and $C_1$, there is now an unoccupied interval  of $(z+sk+t)-z=sk+t$ spots, which is clearly nonempty and not a multiple of $k$. Thus, no matter what preferences the remaining cars have, it is impossible to park all cars on this street. This yields a contradiction to our assumption. 
\end{proof}

Recall that a $\vec{u}$-parking function of length $n$ is a sequence $(x_1, x_2,..., x_n)$ satisfying $1 \leq x_{(i)} \leq u_i$. We can use the results of vector parking functions
to enumerate the number of sequences  described in Theorem \ref{Inv}. 

\begin{corollary}\label{Constant-count} 
Let $(\vec{y};z) = ((k^n); z)$ with $k \geq 2$. Then, $$\#\PS_{inv}(\vec{y};z) = z(n+z)^{n-1}.$$
\end{corollary}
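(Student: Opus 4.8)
The plan is to count the sequences characterized in Theorem~\ref{Inv} by exhibiting a bijection to a set of $\vec{u}$-parking functions whose cardinality is already known in closed form. By Theorem~\ref{Inv}, a sequence $\mathbf{c}=(c_1,\dots,c_n)$ lies in $\PS_{inv}(\vec{y};z)$ (for $\vec{y}=(k^n)$, $k\geq 2$) precisely when every entry lies in the set $S=\{1,2,\dots,z,z+k,z+2k,\dots,z+(n-1)k\}$ and the order statistics satisfy $c_{(i)}\leq z+(i-1)k$. I would first observe that $S$ has exactly $z+(n-1)=z+n-1$ elements, and that condition~(i) is a constraint only on the multiset of values used, not on their arrangement; so the arrangement freedom is handled cleanly once the multisets are counted.

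The key step is to transport the value set $S$ to an interval so that the order-statistic inequalities become the defining inequalities of a vector parking function. I would define a monotone relabeling $\phi\colon S \to \mathbb{Z}_+$ that sends the ``small'' values $1,2,\dots,z$ to $1,2,\dots,z$ and the arithmetic-progression values $z+sk$ (for $s=0,1,\dots,n-1$) to $z+s$; one must check this is well defined where the two families overlap, namely that $z=z+0\cdot k$ maps consistently (both send it to $z$). Applying $\phi$ entrywise to $\mathbf{c}$ produces a sequence $\mathbf{x}=(x_1,\dots,x_n)\in\mathbb{Z}_+^n$ with entries in $[z+n-1]$. Under $\phi$ the $i$-th order statistic satisfies $c_{(i)}\leq z+(i-1)k$ if and only if $x_{(i)}\leq u_i$, where $u_i$ is the image of the threshold. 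Here I expect the arithmetic to give $u_i = z+(i-1)$ for each $i$, i.e.\ $\vec{u}=(z,z+1,z+2,\dots,z+n-1)$, which is exactly the affine sequence $u_i=z+(i-1)=a+b(i-1)$ with $a=z$ and $b=1$.

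Once the bijection $\mathbf{c}\mapsto\mathbf{x}$ is established between $\PS_{inv}((k^n);z)$ and $\PF_n(\vec{u})$ with $\vec{u}=(z,z+1,\dots,z+n-1)$, the count follows immediately from the formula quoted in the introduction: for $u_i=a+b(i-1)$ the number of $\vec{u}$-parking functions is $a(a+bn)^{n-1}$. Substituting $a=z$ and $b=1$ yields $z(z+n)^{n-1}$, which is the desired $z(n+z)^{n-1}$. I would close by remarking that $\phi$ is a bijection onto $[z+n-1]$ and is strictly increasing, so it preserves both membership in the value set and all order-statistic inequalities, making $\mathbf{c}\mapsto\mathbf{x}$ a genuine bijection rather than merely a surjection.

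The main obstacle I anticipate is purely bookkeeping: verifying that the relabeling $\phi$ is consistent at the shared value $z$ and that the threshold $z+(i-1)k$ transforms exactly to $u_i=z+(i-1)$ under $\phi$. The subtle point is that the inequality $c_{(i)}\leq z+(i-1)k$ for a value $c_{(i)}\in S$ is equivalent to ``$c_{(i)}$ is among the $i$ smallest elements of $S$,'' because consecutive admissible values above $z$ are spaced exactly $k$ apart; it is this equal-spacing (guaranteed by $\vec{y}$ being constant with $k\geq 2$) that makes the rank of $c_{(i)}$ in $S$ equal to its rank-threshold index, so that condition~(i) becomes precisely $x_{(i)}\leq z+(i-1)$. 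Once that equivalence is checked, the rest is a direct appeal to the affine $\vec{u}$-parking-function count.
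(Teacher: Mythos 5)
Your proof is correct and takes essentially the same route as the paper: the paper's proof defines exactly this relabeling (fixing $1,\dots,z$ and sending $z+sk\mapsto z+s$) to get a bijection from $\PS_{inv}((k^n);z)$ onto $\PF_n(\vec{u})$ with $\vec{u}=(z,z+1,\dots,z+n-1)$, and then quotes the known count $z(z+n)^{n-1}$ for such $\vec{u}$-parking functions.
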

\begin{proof}
For any $\mathbf{c}=(c_1,...,c_n) \in \PS_{inv}(\vec{y};z)$ let 
$\mathsf{f}(\mathbf{c})=\mathbf{c}'$,  where $\mathbf{c}'$ is the sequence whose entries are given by
\[ c_i'= 
\begin{cases}
c_i, \text{   if  } 1\leq c_i \leq z \\
z+s, \text{   if  } c_i = z+sk.
\end{cases}
\]
The condition $c_{(i)} \leq z+(i-1)k$ implies $c_{(i)}' \leq z+i-1$, hence $\mathbf{c}'$ is a vector parking function associated to the vector $\vec{u}=(z,z+1,...,z+n-1)$, and $f$ is a map from 
 $\PS_{inv}(\vec{y};z) $ to $\PF_n(\vec{u})$. 
It is clear that $\mathsf{f}$ is a bijection 
since the map can be easily inverted. 
By \cite[Corollary 5.5]{goncpoly}, the number of $\vec{u}$-parking functions is $z(z+n)^{n-1}$.
\end{proof}

\textsc{Remark}. Note that Theorem~\ref{Inv} and 
Corollary~\ref{Constant-count}  are also valid for $k=1$, in which case $\PS_{inv}((1^n);z) = \PS((1^n);z)$ are exactly $\vec{u}$-parking functions associated to 
$\vec{u}=(z, z+1, \dots, z+n-1)$.

\subsection{Length vector \texorpdfstring{$\vec{y}=(a,...,a,b,...,b)$}{} where 
\texorpdfstring{$a<b$}{}}
Let $n \geq 2$ and $z, a, b, r$ be positive integers with 
$a < b$ and $1 \leq r <n$.   In this subsection we fix $\vec{y}=(\underbrace{a,a,...,a}_r, \underbrace{b,...,b}_{n-r}) =(a^r, b^{n-r})$, i.e. the first $r$ cars are of size $a$ and the remaining $n-r$ cars are of size $b$. 
First we prove a couple of Lemmas that characterize the set of permutation-invariant parking sequences  for $(\vec{y};z)$.  
In the following we will refer to any car of size $a$ (respectively, size $b$) as an $A$-car (respectively, $B$-car). 

\begin{lemma}\label{lemI}
Assume $\mathbf{c} \in \PS_{inv}(\vec{y};z)$.  Then in the final parking configuration of $\mathbf{c}$,  all $A$-cars park in $[z,z+ra-1]$.
\end{lemma}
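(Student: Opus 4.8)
The plan is to exploit the fact that, because the length vector is $(a^r,b^{n-r})$, in \emph{every} rearrangement of $\mathbf c$ the $r$ $A$-cars are processed before any $B$-car. Hence the $A$-cars always park into an empty street, and since they all have the same size $a$, the set of spots they occupy is determined by the multiset of their preferences alone. A short induction (the standard one for equal-size cars) shows that the $A$-cars occupy the front block $[z,z+ra-1]$ exactly when the sorted $A$-preferences $c^A_{(1)}\le\cdots\le c^A_{(r)}$ satisfy $c^A_{(j)}\le z+(j-1)a$ for every $j\in[r]$. Thus the lemma is equivalent to showing that the $r$ preferences carried by the $A$-cars of $\mathbf c$ meet this packing condition.

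Since $\mathbf c$ is permutation-invariant, I am free to decide which $r$ of its entries sit in the $A$-positions, and every resulting sequence is again in $\PS(\vec y;z)$. It therefore suffices to handle the worst assignment, in which the $A$-cars receive the $r$ \emph{largest} entries, and to prove that even then they park to the front. Along the way I will use the bounds coming from $\mathbf c_{inc}$: by invariance $\mathbf c_{inc}\in\IPS(\vec y;z)$, so Proposition~\ref{PSchar2} (together with Corollary~\ref{PScharcor}) gives $c_{(i)}\le z+(i-1)a$ for $1\le i\le r$ and $c_{(r+1)}\le z+ra$.

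I would then argue by contradiction. If the packing condition fails, let $j_0$ be the first index at which it breaks and set $s_0=z+(j_0-1)a$; assigning the $j_0-1$ smallest preferences to $A$-cars fills $[z,s_0-1]$ exactly. I would next build a rearrangement that forces some car to stop just past $s_0$, leaving behind a stranded empty interval of length between $1$ and $b-1$ — a hole that no remaining $B$-car can enter because $a<b$ — so that this rearrangement is not a parking sequence, contradicting invariance. The crux, and the step I expect to be the main obstacle, is guaranteeing that a preference of exactly the right magnitude (one landing strictly inside $(s_0,s_0+b)$) is available to cut such a sub-$b$ hole; this is precisely where the inequalities $c_{(i)}\le z+(i-1)a$ and $a\le b-1$ are invoked, and the delicate case is when $B$-cars are numerous, so that the bound on the relevant order statistic is weakest and the stranding car must be chosen with more care.
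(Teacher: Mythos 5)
There are genuine gaps here, and the first one is fatal to the whole reduction. Your foundational claim --- that for equal-size cars the set of occupied spots ``is determined by the multiset of their preferences alone,'' and that the $A$-cars fill $[z,z+ra-1]$ exactly when $c^A_{(j)}\le z+(j-1)a$ for all $j$ --- is false once $a\ge 2$. Take $a=2$, $z=1$ and two $A$-cars: with preferences in the order $(1,2)$ they occupy $[1,4]$, but in the order $(2,1)$ the first car parks on $[2,3]$ and the second collides, even though the sorted preferences satisfy $1\le 1$ and $2\le 3$. The correct packing criterion for constant-size cars is the paper's Theorem~\ref{Inv}: besides the order-statistic bounds one needs every preference to lie in $\{1,\dots,z\}\cup\{z+a,z+2a,\dots\}$. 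That residue condition is essentially the hard content of this lemma (it is what Lemma~\ref{lemIII} later extracts from it), and it does \emph{not} follow from the bounds $c_{(i)}\le z+(i-1)a$ you import from Proposition~\ref{PSchar2}. This also breaks your ``worst assignment'' reduction: whether a set of $A$-preferences packs to the front is not monotone under replacing entries by smaller ones (a preference $z+1$ with $a\ge 2$ strands a gap that $z+a$ would not), so verifying the $r$ largest entries does not settle the other assignments.

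Second, even granting the reduction, the contradiction step is only announced, not carried out: you yourself flag the construction of a rearrangement that strands a hole of length in $[1,b-1]$ as ``the main obstacle'' and leave it open, particularly in the case of many $B$-cars. Since that is where the proof actually lives, the argument is incomplete. For comparison, the paper avoids any packing criterion for the $A$-block: it works directly with the final configuration $\mathcal{F}$ of $\mathbf{c}$, takes the last $A$-car $C_j$ parked beyond $[z,z+ra-1]$ and the first $B$-car $C_k$ in $\mathcal{F}$, and exchanges (or relocates) their preferences so that after the swap the first block of empty spots has length $l_1b-a$, which is positive and not a multiple of $b$ and hence cannot be filled by the remaining $B$-cars since $a<b$. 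If you want to salvage your plan, you would need to first prove the residue condition on the $A$-preferences by a separate invariance argument (e.g., moving an offending preference $z+sa+t$, $0<t<a$ or a non-aligned value, to the front to strand a short gap), at which point you are close to reproving Theorem~\ref{Inv} and Lemma~\ref{lemIII} before Lemma~\ref{lemI}.
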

\begin{proof}
Suppose not. Then, there is some $\mathbf{c} \in \PS_{inv}(\vec{y};z)$ with which at least one $A$-car is not parked in the interval $[z,z+ra-1]$ in the final parking configuration $\mathcal{F}$.
In $\mathcal{F}$,  between the trailer $T$ and  all $A$-cars 
there are blocks  $L_1, \dots, L_m$ of consecutive spots occupied by $B$-cars. Assume the block $L_i$ consists of $l_ib$ spots,  where $l_1+l_2+\cdots + l_m=n-r$. 
Let $C_j$ be the last $A$-car in the configuration $\mathcal{F}$. Then $C_j$ occupies some spots in $[z+ra, z+ra+(n-r)b-1]$, and no other $A$-car has checked the spots $C_j$ occupied in the  parking process.  In addition, let $C_k$ be the first $B$-car in $\mathcal{F}$. Then  $j \leq r < k$ and $C_k$ parks before $C_j$ in $\mathcal{F}$. 
\begin{enumerate} 
\item[\textbf{Case 1:}] Assume in $\mathcal{F}$ there are some other $A$-cars parked between $C_k$ and $C_j$. Consider the rearrangement $\mathbf{c'}=(c_1,...,c_{j-1},c_k,c_{j+1},...,c_{k-1},c_{j},c_{k+1},...,c_n)$ obtained by exchanging the $j$-th and $k$-th terms in $\mathbf{c}$. Let the cars park according to the preference $\mathbf{c'}$. It is easy to see that  all $A$-cars occupy the same spots as in $\mathcal{F}$ except that $C_j$ now parks on $a$ of the $b$ spots originally occupied by  $C_k$ in $\mathcal{F}$,  leaving $(b-a)$ of these spots unused. Hence after all the $A$-cars are parked, the first block of consecutive open spots has size $l_1b-a$, which is not a multiple of $b$. Thus it is impossible for the remaining $B$-cars to fill in and hence
$\mathbf{c'} \not \in PS(\vec{y};z)$. 

\item[\textbf{Case  2:}] There is no $A$-car parked between $C_k$ and $C_j$ in $\mathcal{F}$. Then $\mathcal{F}$ is of the form $A \cdots A B \cdots B A B \cdots B$, where there are $r-1$ $A$-cars before the first $B$-car $C_k$, and $c_j=z+(r-1)a+l_1b$. Let $\mathbf{c}''$ be the following rearrangement of $\mathbf{c}$:  the first $r$ entries of $\mathbf{c}''$ are $c_1, \dots, c_{j-1}, c_k, c_{j+1}, \dots, c_r$, obtained from the first entries of $\mathbf{c}$ by replacing $c_j$ with $c_k$; the preferences for $B$-cars are $c_j, c_{r+1}, \dots, c_{k-1}, c_{k+1}, \dots, c_n$.  Let the cars park according to $\mathbf{c}''$. Then the $A$-cars will occupy the spots $[z, z+ra-1]$, and the first $B$-car occupies spots 
$[c_j, c_j+b-1]$.  Now there are $c_j-1-(z-1+ra)= l_1b-a$ spots between the last $A$-car and the first $B$-car; these spots cannot be filled by other $B$-cars.  Hence $\mathbf{c}'' \not \in \PS(\vec{y};z)$. 
\end{enumerate}

In  both cases we have a permutation of $\mathbf{c}$ that is not in $\PS(\vec{y};z)$,  contradicting the assumption that 
$\mathbf{c} \in \PS_{inv}(\vec{y};z)$. 
\end{proof}

\vanish{
Also, at least one of these open blocks intersects $[z,z+ra-1]$. Next, we attempt to park the $(n-r)$ $B$-cars. Let car $C_j$ be the last $A$-car to park on some of the spots in $[z+ra, z+ra+(n-r)b-1]$ such that no other $A$-car checked the spots $C_j$ occupies in the final parking configuration. Let car $C_k$ be the first $B$-car in the final parking configuration with at most $r-1$ $A$-cars before it. Then,$s\leq r-1$ and $c_{k} \leq z+(r-1)a$. Note that $r+1\leq k \leq n$.

Consider the rearrangement $\mathbf{c'}=(c_1,...,c_{j-1},c_k,c_{j+1},...,c_{k-1},c_{j},c_{k+1},...,c_n)$ obtained by exchanging the $j$-th and $k$-th preferences in $\mathbf{c}$. We attempt to park according to this new preference sequence. First let all $A$-cars park. Clearly, all $A$-cars occupy the same spots as before except that $C_j$ now parks on $a$ of the $b$ spots originally occupied by car $C_k$ in $\mathbf{c}$ leaving $(b-a)$ of these spots unused. Now, there are two possible scenarios that could have arisen in the final parking configuration for $\mathbf{c}$:
\begin{enumerate}
    \item[\textbf{Case 1:}] \emph{There was some $A$-car parked between $C_k$ and $C_j$.} In this case, there exists some $l_1 \geq 1$ such that there was originally a block of $l_1b$ open spots between the first spot on which $C_k$ parked and the nearest $A$-car in front of it. With this new preference $\mathbf{c'}$, this first block now has $l_1b-a$ many spots and it is impossible for the remaining $B$-cars to park, since each of these cars has size $b$ and $a<b$.
    \item[\textbf{Case 2:}] \emph{There were no cars parked between $C_k$ and $C_j$.} In this case, cars $C_k$ and $C_j$ were parked right next to each other. With $\mathbf{c'}$, all $A$-cars park on $[z,z+ra-1]$ and assuming no collisions with any other $B$-car, $C_k$ parks starting at spot $c_j$, where $c_j \geq z+ra+1$. All $B$-cars before $C_k$ had preference $\geq z+(r-1)a$ otherwise this would have contradicted the definition of $C_k$. Hence, either some $B$-car has preference in $[z+(r-1)a,z+ra]$ and cannot park due to not having enough open spots or all $B$-cars have preference $\geq z+ra+1$ hence $z+ra$ will be unoccupied. Both options are impossible. 
    \item[\textbf{Case 3:}] \emph{There were only $B$-cars parked between $C_k$ and $C_j$.} This means $C_j$ was the only $A$-car not parked on $[z,z+ra-1]$, else we are back to the first case. Thus, in $\mathbf{c'}$, after all $A$-cars are done parking, they are parked on $[z,z+ra-1]$. Next, the $B$-cars start to park. Assuming no collision, car $C_k$ (a $B$-car) parks starting at some spot $z+(r-1)a+(s+1)b$ where $s \geq 1$. There are $(s+1)b-a$ total spots between $C_j$ and $C_k$ by the time $C_k$ is parked. Consequently, no matter the preferences of the $B$-cars, there will be some $b-a$ unused spots in this region of the street. Again, this is impossible.
\end{enumerate}
}

\begin{lemma}\label{lemII}
If $(c_1,c_2,...,c_n) \in \PS_{inv}(\vec{y};z)$, then $c_i \leq z+(r-1)a$ for all $1\leq i \leq n$.
\end{lemma}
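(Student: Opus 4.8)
The plan is to argue by contradiction, exploiting permutation-invariance to move an oversized preference into the very first coordinate, where the first car is forced to park exactly at its preference, and then to derive a contradiction with Lemma~\ref{lemI}. Suppose some entry satisfies $c_i > z+(r-1)a$, say $c_i = z+(r-1)a+s$ with $s\ge 1$. I would let $\mathbf{c}'$ be the rearrangement of $\mathbf{c}$ obtained by moving $c_i$ into the first position (and keeping the remaining entries in their relative order). Since $\mathbf{c}'$ has exactly the same multiset of entries as $\mathbf{c}$, the collection of its rearrangements coincides with that of $\mathbf{c}$; as $\mathbf{c}\in\PS_{inv}(\vec{y};z)$, every rearrangement of $\mathbf{c}$ lies in $\PS(\vec{y};z)$, and therefore $\mathbf{c}'\in\PS_{inv}(\vec{y};z)$ as well. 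In particular Lemma~\ref{lemI} is available for $\mathbf{c}'$.

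Next I would inspect the parking process for $\mathbf{c}'$. Because $r\ge 1$, the first car $C_1$ is an $A$-car of size $a$, now carrying preference $c_i$. Since the street is empty apart from the trailer occupying $[1,z-1]$ and $c_i\ge z$, the first empty spot at or beyond $c_i$ is $c_i$ itself, so $C_1$ parks on the block $[c_i, c_i+a-1]$. Its rightmost occupied spot is $c_i+a-1 \ge z+ra > z+ra-1$, so $C_1$ occupies at least one spot outside $[z,z+ra-1]$. This contradicts Lemma~\ref{lemI}, which forces every $A$-car of the invariant sequence $\mathbf{c}'$ to park within $[z,z+ra-1]$. Hence no entry of $\mathbf{c}$ can exceed $z+(r-1)a$, which is the claim.

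The argument is short, and the only point requiring care is the observation that a rearrangement of a permutation-invariant sequence is again permutation-invariant, so that Lemma~\ref{lemI} legitimately applies to $\mathbf{c}'$; once this is in place there is no genuine obstacle, as the behavior of the first car is entirely forced by the parking rules. One could instead bypass Lemma~\ref{lemI} by arguing directly that the gap of $c_i-z \ge (r-1)a+1$ empty spots left between the trailer and $C_1$ cannot be filled by the remaining $r-1$ $A$-cars and $n-r$ $B$-cars, but this reintroduces the casework on $a$ and $b$ that Lemma~\ref{lemI} was designed to encapsulate, so routing through Lemma~\ref{lemI} is the cleaner path.
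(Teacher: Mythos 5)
Your proof is correct and is essentially the paper's own argument: the paper's one-line proof likewise permutes an oversized entry (specifically the maximum) to the first position and derives a contradiction with Lemma~\ref{lemI}. Your write-up just spells out the details the paper leaves implicit, including the useful observation that a rearrangement of a permutation-invariant sequence is again permutation-invariant, so Lemma~\ref{lemI} genuinely applies to $\mathbf{c}'$.
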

\begin{proof}
Suppose not. Take any permutation of $\mathbf{c}$ starting with $\max \{c_i: i \in [n]\}$ and we contradict the conclusion of Lemma \ref{lemI}.
\end{proof}

\begin{lemma}\label{lemIII}
For $\mathbf{c} \in \PS_{inv}(\vec{y};z)$, let $c_{(1)}\leq c_{(2)} \leq \cdots \leq c_{(n)}$ be the order statistics of $\mathbf{c}$. Then, $c_{(i)} \leq z$ for each $1\leq i \leq n-r+1$ and $c_{(n-r+j)} \in \{1,...z,z+a,z+2a,...,z+(j-1)a\}$  for each $ 2\leq j \leq r$.
\end{lemma}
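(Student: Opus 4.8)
The plan is to exploit permutation-invariance in the strongest possible way: since $\mathbf{c}\in\PS_{inv}(\vec{y};z)$, \emph{every} assignment of the multiset of preference values $\{c_{(1)},\dots,c_{(n)}\}$ to the cars $C_1,\dots,C_n$ yields a valid parking sequence, and, crucially, the sizes are frozen, so the $r$ $A$-cars $C_1,\dots,C_r$ always enter before any $B$-car. By Lemma~\ref{lemI}, applied to each such rearrangement (which again lies in $\PS_{inv}(\vec{y};z)$, being a permutation of $\mathbf{c}$), all $A$-cars park inside $[z,z+ra-1]$; since $r$ cars of size $a$ exactly fill an interval of length $ra$, they must \emph{tile} it, occupying precisely the blocks $[z+ka,z+(k+1)a-1]$ for $k=0,\dots,r-1$, and the $B$-cars then fill $[z+ra,\,z-1+ra+(n-r)b]$. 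Every step below chooses a convenient rearrangement and contradicts this rigid picture.

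First I would establish two structural facts. \textbf{(a)} Every preference value $v>z$ has the form $v=z+la$ with $1\le l\le r-1$: assign $v$ to $C_1$ and let it enter first; on the empty street it parks on $[v,v+a-1]$, which by Lemma~\ref{lemII} lies inside $[z,z+ra-1]$, so by the tiling alignment above $v$ must be one of $z,z+a,\dots,z+(r-1)a$, and $v>z$ forces $v=z+la$. \textbf{(b)} For each $1\le l\le r$ one has $\#\{i:c_i\ge z+la\}\le r-l$. Suppose instead that at least $r-l+1$ preferences are $\ge z+la$, and assign $r-l+1$ of them to $A$-cars. Because cars only move forward, each of these $A$-cars parks at a position $\ge z+la$, while by Lemma~\ref{lemI} it parks at a position $\le z+(r-1)a$; thus all $r-l+1$ of them occupy disjoint length-$a$ blocks inside $[z+la,z+ra-1]$, an interval of only $(r-l)a$ spots. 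Since $r-l+1$ disjoint size-$a$ blocks require $(r-l+1)a>(r-l)a$ spots, this is impossible.

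The lemma then follows quickly. For the first assertion, combining (a) with (b) at $l=1$ gives $\#\{i:c_i>z\}=\#\{i:c_i\ge z+a\}\le r-1$, so at least $n-r+1$ preferences are $\le z$; hence $c_{(n-r+1)}\le z$ and therefore $c_{(i)}\le z$ for all $1\le i\le n-r+1$. For the second assertion, fix $2\le j\le r$. By (a), $c_{(n-r+j)}$ is either $\le z$ or equals $z+la$ for some $l\ge 1$. If $l\ge j$, then the $r-j+1$ largest order statistics $c_{(n-r+j)},\dots,c_{(n)}$ are all $\ge z+ja$, so $\#\{i:c_i\ge z+ja\}\ge r-j+1$, contradicting (b) at $l=j$; hence $l\le j-1$ and $c_{(n-r+j)}\in\{1,\dots,z,z+a,\dots,z+(j-1)a\}$.

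The main obstacle is the counting claim (b): one must check that each chosen rearrangement is genuinely a permutation of $\mathbf{c}$ (so that permutation-invariance, and hence Lemma~\ref{lemI}, may be re-applied to it) and that the forward-only parking rule together with Lemma~\ref{lemI} really confine the offending $A$-cars to the short interval $[z+la,z+ra-1]$ before the length count bites. Once (a) and (b) are in hand, the deduction of both assertions is routine bookkeeping.
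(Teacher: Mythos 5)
Your proof is correct, and it rests on the same pillars as the paper's --- Lemmas \ref{lemI} and \ref{lemII} confining the $A$-cars to $[z,z+ra-1]$ in every rearrangement --- but you take a more self-contained route to the conclusion. The paper's proof is a two-step reduction: since every $r$-term subsequence of $\mathbf{c}$ must park the $r$ $A$-cars so as to tile $[z,z+ra-1]$, the top $r$ order statistics $(c_{(n-r+1)},\dots,c_{(n)})$ form an element of $\PS_{inv}((a^r);z)$, and Theorem \ref{Inv} (the constant-length-vector characterization) is then quoted to read off both the membership in $\{1,\dots,z,z+a,\dots,z+(j-1)a\}$ and the bound $c_{(n-r+1)}\leq z$. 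Your facts (a) and (b) are exactly the two conditions of Theorem \ref{Inv} --- the $a$-alignment of values above $z$ and the order-statistic bound --- re-derived from scratch via the tiling argument and the pigeonhole count on $[z+la,z+ra-1]$. What the paper's approach buys is brevity and reuse of an already-proved theorem; what yours buys is an explicit, standalone mechanism that makes visible \emph{why} the constraints hold (misaligned preferences break the tiling; too many large preferences overflow a short interval), at the cost of essentially inlining one direction of Theorem \ref{Inv}. All the steps check out: the rearrangements you invoke are genuine permutations of $\mathbf{c}$, so Lemma \ref{lemI} legitimately applies to each, and the deduction of both assertions from (a) and (b) is sound.
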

\begin{proof}
By Lemmas \ref{lemI} and \ref{lemII}, if $\mathbf{c} \in \PS_{inv}(\vec{y};z)$, then any $r$-term subsequence of $\mathbf{c}$, say $(c_{i_1},c_{i_2},...,c_{i_r})$,
parks all $r$ $A$-cars in $[z,z+ra-1]$ and hence $c_i \leq z+(r-1)a$ for all $i=1,...,n$. Furthermore, if we consider the last $r$ terms of the order statistics of $\mathbf{c}$, this means $(c_{(n-r+1)}, c_{(n-r+2)},...,c_{(n)}) \in \PS_{inv}((a,a,...,a);z)$. By Theorem \ref{Inv}, we obtain $c_{(n-r+j)} \in \{1,...z,z+a,z+2a,...,z+(j-1)a\}$  for each $ 1\leq j \leq r$. Finally by the order statistics, $c_{(i)} \leq c_{(n-r+1)} \leq z$ for each $1\leq i \leq n-r$. 
\end{proof}
Combining Lemmas \ref{lemI}, \ref{lemII} and \ref{lemIII}, we prove the following result.
\begin{theorem}\label{main}
Let $n \geq 2$ and $z, a, b, r$ be positive integers with 
$a < b$ and $1 \leq r <n$.   Assume $\vec{y}=(a^r, b^{n-r})$. 
Let $\PF_n(\vec{u})$ be the set of $\vec{u}$-parking functions of length $n$ where $\vec{u}=(\underbrace{z,z,...,z}_{n-r+1},z+1,z+2,...,z+r-1)$. 
Then, there is a bijection between the sets $\PS_{inv}(\vec{y};z)$ and $\PF_n(\vec{u})$.
\end{theorem}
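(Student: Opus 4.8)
The plan is to build an explicit entry-wise bijection $f:\PS_{inv}(\vec{y};z)\to\PF_n(\vec{u})$ in the spirit of the map used in Corollary~\ref{Constant-count}, and then to verify that it and its obvious inverse are both well defined. Concretely, for $\mathbf{c}=(c_1,\dots,c_n)\in\PS_{inv}(\vec{y};z)$ I set $f(\mathbf{c})=\mathbf{c}'$ where
\[
c_i'=\begin{cases} c_i & \text{if } 1\le c_i\le z,\\ z+s & \text{if } c_i=z+sa \text{ for some } s\ge 1.\end{cases}
\]
By Lemma~\ref{lemIII} every entry of $\mathbf{c}$ lies in $\{1,\dots,z\}\cup\{z+a,z+2a,\dots,z+(r-1)a\}$, so $f$ is defined, and since $f$ is strictly increasing on this value set it preserves order statistics, i.e. $c'_{(i)}=f(c_{(i)})$. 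The inverse $g$ sends $x_i\mapsto x_i$ when $x_i\le z$ and $z+s\mapsto z+sa$ when $x_i=z+s>z$; this is meaningful since $x_{(n)}\le u_n=z+r-1$ forces $s\le r-1$.

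The forward direction is immediate from Lemma~\ref{lemIII}. That lemma gives $c_{(i)}\le z$ for $i\le n-r+1$ and $c_{(n-r+j)}\in\{1,\dots,z,z+a,\dots,z+(j-1)a\}$ for $2\le j\le r$; applying $f$ turns these into $c'_{(i)}\le z=u_i$ and $c'_{(n-r+j)}\le z+(j-1)=u_{n-r+j}$ respectively, which is exactly the statement that $\mathbf{c}'$ is a $\vec{u}$-parking function. Thus $f$ maps into $\PF_n(\vec{u})$, and it is clearly injective because $g\circ f$ is the identity.

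The real content is surjectivity, i.e. showing $g(\mathbf{x})\in\PS_{inv}(\vec{y};z)$ for every $\mathbf{x}\in\PF_n(\vec{u})$; equivalently, that the necessary conditions of Lemma~\ref{lemIII} are also sufficient. Writing $\mathbf{c}=g(\mathbf{x})$, these conditions are symmetric in the entries, so it suffices to show that an arbitrary arrangement $\mathbf{c}'$ parks all cars. Rephrased as counting bounds, the conditions read $\#\{i:c_i\ge z+sa\}\le r-s$ for $1\le s\le r$. The crucial observation is that the car order is fixed: the A-cars $C_1,\dots,C_r$ all enter before any B-car, so I can split the process into two phases. In Phase~1 the $r$ A-cars park using $c'_1,\dots,c'_r$; since these form a sub-multiset of the entries of $\mathbf{c}$, they inherit the bounds $\#\{i\le r:c_i'\ge z+sa\}\le r-s$, which is precisely condition (i) of Theorem~\ref{Inv} for the constant length vector $(a^r)$, while condition (ii) holds because every $c_i'$ is a grid value in $\{1,\dots,z\}\cup\{z+a,\dots,z+(r-1)a\}$. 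Hence the A-car preferences lie in $\PS_{inv}((a^r);z)$ (the case $a=1$ being handled by the remark following Corollary~\ref{Constant-count}), so the $r$ A-cars fill the block $[z,z+ra-1]$ exactly, with no gaps and no overflow into the $B$-region, which stays empty throughout Phase~1. In Phase~2 each B-car has preference at most $z+(r-1)a<z+ra$, which now points into the fully occupied block $[z,z+ra-1]$; consequently every B-car advances to the first free spot at $z+ra$ or beyond, and the B-cars fill $[z+ra,z+ra+(n-r)b-1]$ from left to right. All $n$ cars park, so $\mathbf{c}'\in\PS(\vec{y};z)$, and by symmetry $\mathbf{c}\in\PS_{inv}(\vec{y};z)$.

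I expect the main obstacle to be the sufficiency argument of the last paragraph; the idea that unlocks it is the fixed entry order of the cars (all A-cars precede all B-cars), which lets me reduce the A-phase to the already-established constant case of Theorem~\ref{Inv} via the monotonicity of the counting conditions under passage to a sub-multiset. Once the A-cars are shown to pack $[z,z+ra-1]$ exactly, Phase~2 is routine since the B-cars can no longer interfere with the A-region. The remaining bookkeeping — that $f$ and $g$ are mutually inverse, hence that $f$ is the desired bijection — is then straightforward.
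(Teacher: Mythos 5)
Your proposal is correct and follows essentially the same route as the paper: the same entry-wise map $c_i\mapsto c_i$ for $c_i\le z$ and $z+sa\mapsto z+s$, with the forward direction given by Lemma~\ref{lemIII} and the substance being the converse claim that the conditions of Lemma~\ref{lemIII} are sufficient, proved by the same two-phase argument (the $A$-car preferences form a sub-multiset inheriting the bounds of Theorem~\ref{Inv}, so the $A$-cars pack $[z,z+ra-1]$ exactly, after which the $B$-cars park in order). Your rephrasing of the order-statistic bounds as counting conditions to justify passage to the sub-multiset is a slightly more explicit version of the paper's ``coordinate-wise'' remark, but the argument is the same.
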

\begin{proof}
First, we claim that any $\mathbf{c}$ satisfying the inequalities in Lemma \ref{lemIII} is in $\PS_{inv}(\vec{y};z)$. To see this, consider first the $A$-cars with preferences $(c_1, \dots, c_r)$. We have $c_i \in \{1,2,...,z,z+a,z+2a,...,z+(r-1)a\}$ for 
all $1 \leq i \leq r$, and the order statistics of these $r$ terms are no more than $(z, z+a, \cdots, z+(r-1)a)$ (coordinate-wise). 
By Theorem \ref{Inv}, $(c_1, \dots, c_r)$ is a parking sequence for $((a^r); z)$. Hence all $A$-cars must  park on $[z,z+ra-1]$. Next, consider the $B$-cars. Since $c_{i} \leq z+(r-1)a$ and all $A$-cars are parked without any unoccupied spots on $[z,z+ra-1]$, then all $B$-cars park in increasing order after the $A$-cars. In other words, the final parking configuration is $T, C_1',...,C_r', C_{r+1},...,C_n$ where $C_1',...,C_r'$ is some rearrangement of the $A$-cars. This proves the claim.  

Now, by the above claim and Lemma \ref{lemIII}, we have shown that $\PS_{inv}(\vec{y};z)$ is exactly the set of all sequences $\mathbf{c}$ whose order statistics satisfy $c_{(i)} \leq z$ for each $1\leq i \leq n-r+1$ and $c_{(n-r+j)} \in \{1,...z,z+a,z+2a,...,z+(j-1)a\}$  for each $ 2\leq j \leq r$.
Let $\vec{u}=(u_1, u_2, \dots, u_n)= (z,z,...,z,z+1,z+2,...,z+r-1)$. 
Consider  the map $\gamma_a:\PS_{inv}(\vec{y};z) \to \PF_n(\vec{u})$ defined as follows.
$$\gamma_a:(c_1,...,c_n) \mapsto (c_1',...,c_n')=\mathbf{c'}$$ where for all $1\leq j\leq n$
\[ c_j'= 
\begin{cases}
c_j, \text{   if  } c_j \leq z \\
z+s, \text{   if  } c_j = z+sa.
\end{cases}
\]
The map $\gamma_a$ is well-defined since
the sequence $\mathbf{c'}$ has order statistics satisfying $1\leq c_{(i)}' \leq u_i$ for each $i=1,2,...,n$.  
Thus $\mathbf{c}' \in \PF_n(\vec{u})$. Clearly the map $\gamma_a$ is invertible,  
hence $\gamma_a$ is a bijection.
\end{proof}
\begin{corollary}
Let $\vec{y}$ and $\vec{u}$ be as in Theorem \ref{main}.  
Then,
\begin{align*}
\# \PS_{inv}(\vec{y}; z)&=\# \PF_n(\vec{u}) \\
&=  \sum_{j=0}^{r-1}\binom{n}{j}(r-j)\,r^{j-1}z^{n-j}.
\end{align*}
In particular, when $r=1$, $\# \PS_{inv}(a,b,b,...,b; z)= z^n.$
\end{corollary}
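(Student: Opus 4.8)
The first equality is exactly the bijection of Theorem~\ref{main}, so the whole problem reduces to evaluating $\#\PF_n(\vec u)$ for $\vec u=(z,\dots,z,z+1,\dots,z+r-1)$ with $n-r+1$ copies of $z$. The plan is to sort the entries according to whether they exceed $z$. For $\mathbf x=(x_1,\dots,x_n)\in\PF_n(\vec u)$, set $j=\#\{i:x_i>z\}$. Since $x_{(n-r+1)}\le u_{n-r+1}=z$, at least $n-r+1$ entries are $\le z$, forcing $j\le r-1$; this is what makes the outer summation run from $0$ to $r-1$. I would first record that the defining inequalities $x_{(i)}\le u_i$ decouple into two requirements: (a) at most $r-1$ entries exceed $z$, and (b) $\#\{i:x_i\ge z+s+1\}\le r-1-s$ for $s=1,\dots,r-1$. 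The point is that (b) constrains only how many entries sit above each threshold $z+s$, while the low entries are free beyond lying in $[z]$.

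With this decoupling I would count the parking functions of this shape by three independent choices. First, choose which $j$ of the $n$ positions carry the entries exceeding $z$: $\binom nj$ ways. Next, assign the $n-j$ low entries arbitrary values in $[z]$, giving $z^{\,n-j}$ ways; condition (a) guarantees that with only $j\le r-1$ high entries one has $x_{(i)}\le z$ for all $i\le n-r+1$ automatically, so no further restriction on the low values is created. The substantive factor is the number $N(j,r)$ of admissible configurations of the $j$ high entries. Writing each high value as $z+s_i$ with $s_i\in\{1,\dots,r-1\}$, condition (b) becomes $\#\{i:s_i\ge t\}\le r-t$ for every $t\in\{1,\dots,r-1\}$; since this depends only on the multiset of levels, $N(j,r)$ is independent of which positions were selected. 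This yields $\#\PF_n(\vec u)=\sum_{j=0}^{r-1}\binom nj z^{\,n-j}N(j,r)$, which matches the target once $N(j,r)=(r-j)r^{\,j-1}$.

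The heart of the argument is the identity $N(j,r)=(r-j)r^{\,j-1}$, which I would obtain by recognizing the level sequences as parking functions of a shorter, arithmetic type. Rewriting $\#\{i:s_i\ge t\}\le r-t$ as $\#\{i:s_i\le t-1\}\ge j-r+t$ and reading off the order statistics, the condition is equivalent to $s_{(i)}\le (r-j)+(i-1)$ for $i=1,\dots,j$: the indices $i=j-r+t$ with $t=r-j+1,\dots,r-1$ supply the nontrivial bounds, while $i=j$ is forced automatically since $s_{(j)}\le r-1=(r-j)+(j-1)$. Thus $(s_1,\dots,s_j)$ is precisely a $\vec v$-parking function of length $j$ for $\vec v=(r-j,\,r-j+1,\dots,\,r-1)$, which is arithmetic with $a=r-j$ and $b=1$. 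The count $a(a+bj)^{j-1}$ quoted in the introduction (see \cite{goncpoly}) then gives $N(j,r)=(r-j)\bigl((r-j)+j\bigr)^{j-1}=(r-j)r^{\,j-1}$, and substituting into the sum produces the stated formula; when $r=1$ only $j=0$ survives and the answer collapses to $z^n$.

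The step I expect to demand the most care is the index bookkeeping that converts the threshold inequalities $\#\{i:s_i\ge t\}\le r-t$ into the order-statistic inequalities $s_{(i)}\le (r-j)+(i-1)$: pinning down the shift $r-j$ and the exact range of $i$ is precisely what identifies the arithmetic vector $\vec v$ and hence delivers the closed form $(r-j)r^{\,j-1}$. By contrast, the equivalence $x_{(i)}\le u_i\iff$ (a) and (b), the separation into low and high entries, and the final summation are all routine once that equivalence is checked.
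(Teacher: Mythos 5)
Your argument is correct, but it takes a genuinely different route from the paper. The paper's proof of this corollary is a one-line citation: it invokes Theorem~3 of \cite{yan2}, which supplies the closed-form count of $\PF_n(\vec{u})$ for $\vec{u}=(z,\dots,z,z+1,\dots,z+r-1)$ directly, so no combinatorial work is done in the paper beyond the bijection of Theorem~\ref{main}. You instead derive the formula from scratch: you split a $\vec{u}$-parking function by the set of $j\le r-1$ entries exceeding $z$, observe that the threshold conditions decouple the low entries (free in $[z]^{\,n-j}$) from the high entries, and then recognize the level sequence $(s_1,\dots,s_j)$ of the high entries as a $\vec{v}$-parking function for the arithmetic vector $\vec{v}=(r-j,r-j+1,\dots,r-1)$, whose count $(r-j)r^{\,j-1}$ follows from the formula $a(a+bn)^{n-1}$ that the paper itself quotes in the introduction (also from \cite{goncpoly}). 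The index bookkeeping you flag as delicate checks out: $\#\{i:s_i\ge t\}\le r-t$ translates to $s_{(i)}\le (r-j)+(i-1)$ for $i=1,\dots,j-1$, with $i=j$ automatic, and the $j=0$ term correctly contributes $z^n$. What your approach buys is a self-contained proof that makes the structure of the summand $\binom{n}{j}(r-j)r^{\,j-1}z^{n-j}$ transparent; what the paper's citation buys is brevity, delegating exactly this computation to prior work.
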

\begin{proof}
The result follows from Theorem \ref{main}  and [\cite{yan2}, Theorem 3].
\end{proof}

\subsection{Length vector \texorpdfstring{$\vec{y}=(a, 1, 1, \dots, 1)$}{} where \texorpdfstring{$a>1$}{}}
It is natural to ask what happens for the 
length vector $\vec{y}=(a^r, b^{n-r})$ with $a>b$. Unlike in the preceding subsection, the number of sequences in $\PS_{inv}((a^r,b^{n-r});z)$ with $a > b$ depends on the value of $a$ and $b$.   
Table \ref{tab:PPer} shows the initial values for $\PS_{inv}((a^2,b^{n-2});z)$ where $z=b=1$ and $a=2,3$.
\begin{table}[h]
\centering 
\begin{tabular}{l ccccc} 
\hline\hline 
  &\multicolumn{4}{c}{Some Initial Values}
\\ [0.5ex]
\hline 
 & Length $\vec{y}$ & $(2,2)$ & $(2,2,1)$ & $(2,2,1,1)$ & $(2,2,1,1,1)$  \\[-1ex]
\raisebox{1.5ex}{$a=2$:}   \raisebox{1.5ex}
& $\#\PS_{inv}(\vec{y};1)$  & 3 & $7$ & 31 & 171  \\[1ex]
\hline
& Length $\vec{y}$ & $(3,3)$ & $(3,3,1)$ & $(3,3,1,1)$ & $(3,3,1,1,1)$  \\[-1ex]
\raisebox{1.5ex}{$a=3$:}  \raisebox{1.5ex}
& $\#\PS_{inv}(\vec{y};1)$ &3 & $7$ & 13 & 51 \\[1ex]
\hline 
\end{tabular}
\caption{$\# \PS_{inv}((a,a,1,...,1);z)$ with $a=2,3$.}
\label{tab:PPer}
\end{table}

These initial values do not correspond to any known sequences in  the On-Line Encyclopedia of Integer Sequences (OEIS) \cite{OEIS}. 
While we do not have a solution for the general case, in the following we present a small result for the special case where there is one $A$-car and $n-1$ cars each of size $b=1$. 

\begin{prop}\label{lemV}
Suppose $z, a \in \mathbb{Z}_+$ with $a >1$. Let $\PF_n(\vec{u})$ be the set of $\vec{u}$-parking functions, where $\vec{u}=(z,z+1,...,z+n-1)$. Then  
$$\PS_{inv}((a,1^{n-1});z)=\PF_n(\vec{u}).$$
\end{prop}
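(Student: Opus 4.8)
The plan is to prove the two inclusions $\PF_n(\vec{u}) \subseteq \PS_{inv}((a,1^{n-1});z)$ and $\PS_{inv}((a,1^{n-1});z) \subseteq \PF_n(\vec{u})$ separately, where throughout $\vec{u}=(z,z+1,\dots,z+n-1)$, so that membership in $\PF_n(\vec{u})$ is exactly the condition $c_{(i)}\le z+i-1$ for all $i$. The structural fact I will use repeatedly is that the length vector $\vec{y}=(a,1^{n-1})$ is \emph{fixed}, so the first car $C_1$ is always the size-$a$ car; since it arrives on an empty lot, a preference $v$ makes it occupy the block $[\max(v,z),\max(v,z)+a-1]$, which fits iff $\max(v,z)\le z+n-1$. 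I will also use that $\PF_n(\vec{u})$ is closed under permutation, so the forward inclusion reduces to showing that every $\mathbf{c}\in\PF_n(\vec{u})$ is itself a (single) parking sequence for $(\vec{y};z)$.

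For $\PF_n(\vec{u}) \subseteq \PS((a,1^{n-1});z)$, I would first park $C_1$: since $c_1\le c_{(n)}\le z+n-1$ it occupies a block $[p_1,p_1+a-1]$ with $p_1=\max(c_1,z)$, leaving exactly $n-1$ free non-trailer spots $s_1<\cdots<s_{n-1}$, namely $s_\ell=z+\ell-1$ for $\ell\le p_1-z$ and $s_\ell=z+a+\ell-1$ for $\ell>p_1-z$. The remaining cars all have size $1$, so they all park iff the sorted sequence $d_1\le\cdots\le d_{n-1}$ of their preferences satisfies $d_\ell\le s_\ell$ for every $\ell$ (the standard $\vec{u}$-parking criterion for parking into a fixed set of slots). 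I would verify this in two ranges: for $\ell>p_1-z$ the slot $s_\ell=z+a+\ell-1$ already exceeds $z+\ell\ge c_{(\ell+1)}\ge d_\ell$ because $a\ge 2$; for $\ell\le p_1-z$ (which forces $c_1=p_1\ge z+\ell$) the point is that $c_{(\ell)}\le z+\ell-1$ provides at least $\ell$ preferences $\le z+\ell-1$, while $c_1>z+\ell-1$ is not one of them, so at least $\ell$ of the small cars have preference $\le z+\ell-1=s_\ell$, i.e.\ $d_\ell\le s_\ell$. Hence all cars park.

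For the reverse inclusion I would argue by contradiction: assume $\mathbf{c}\in\PS_{inv}$ but $c_{(j)}\ge z+j$ for some $j$, taking $j$ minimal, and exhibit a failing rearrangement, namely the one placing the value $c_{(j)}$ on $C_1$. If $c_{(j)}>z+n-1$ the size-$a$ car cannot park at all, so assume $c_{(j)}\le z+n-1$; then $C_1$ occupies $[c_{(j)},c_{(j)}+a-1]$ and the number of free spots at positions $\ge c_{(j)}$ is $z+n-1-c_{(j)}\le n-1-j$. On the other hand, minimality of $j$ forces $c_{(j-1)}<c_{(j)}$, so the preferences $\ge c_{(j)}$ are exactly $c_{(j)},\dots,c_{(n)}$; after one copy is spent on $C_1$ there remain at least $n-j$ size-$1$ cars whose preferences are $\ge c_{(j)}$, each of which must occupy a distinct free spot at position $\ge c_{(j)}$. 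Since $n-j>n-1-j$, by pigeonhole not all of them can park, so this rearrangement is not in $\PS(\vec{y};z)$, contradicting permutation-invariance.

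The main obstacle is this reverse inclusion, and specifically isolating the correct bad rearrangement: Corollary~\ref{PScharcor} only yields the far weaker bound $c_{(j)}\le z+a+j-2$, so permutation-invariance must be exploited essentially, and the pigeonhole count is exactly where the size-$a$ car's habit of swallowing $a$ consecutive spots (while only one preference value is consumed) is converted into a deficit of available slots. A secondary technical point in the forward direction is the off-by-one in the pre-block range $\ell\le p_1-z$, which is resolved precisely by observing that the large preference assigned to $C_1$ does not compete for the low slots.
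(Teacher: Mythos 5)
Your proof is correct, and it shares the paper's overall skeleton: both inclusions are handled separately, and the containment $\PF_n(\vec{u})\subseteq\PS_{inv}((a,1^{n-1});z)$ is reduced, via permutation-closure of $\PF_n(\vec{u})$, to showing that each $\vec{u}$-parking function is a single parking sequence. But both halves are executed differently. For $\PF_n(\vec{u})\subseteq\PS(\vec{y};z)$ you park the size-$a$ car first and then invoke the slot-matching criterion for unit cars filling a prescribed set of free positions (checking $d_\ell\le s_\ell$ in two ranges); the paper instead supposes some spot $k$ is left empty at the end and derives a contradiction from $\#\{j:x_j\le k\}\ge k-z+1$, since all of those cars must park within the $k-z+1$ spots of $[z,k]$ while $k$ stays empty. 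For the reverse inclusion you take $j$ minimal with $c_{(j)}\ge z+j$, move that single entry to the front, and run a pigeonhole count of free spots at positions $\ge c_{(j)}$ against the $n-j$ remaining unit cars preferring such positions; the paper uses the fully decreasing rearrangement $(c_{(n)},\dots,c_{(1)})$ and simply observes that the first $n-i+1$ cars, of total length $a+n-i$, would all have to fit into the $a+n-i-1$ spots of $[z+i,\,z+a+n-2]$. The paper's choices avoid your minimality bookkeeping and the auxiliary slot-matching lemma, while your version makes explicit exactly where the $a-1$ extra spots swallowed by the large car create the deficit; both arguments are sound and complete.
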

\begin{proof}
Let $\mathbf{c}=(c_1,c_2,...,c_n) \in \PS_{inv}(\vec{y};z)$ and $c_{(1)} \leq c_{(2)} \leq \cdots \leq c_{(n)}$ be its order statistics. If $c_{(i)} > z+i-1$ for some $i$, consider the preference sequence $\mathbf{c}'= (c_{(n)}, c_{(n-1)}, \dots,, c_{(1)})$. 
Under $\mathbf{c}'$ the first $n-i+1$ cars all prefer spots in  $[z+i, z+a+n-2]$. There are only $a+n-i-1$ spots in this interval yet the total length of the first $n-i+1$ cars is $a+n-i$. It is impossible to park. Hence we must have $c_{(i)} \leq z+i-1$ for all $i$ and $\mathbf{c} \in \PF_n(\vec{u})$.

 Conversely, given $\mathbf{x} \in \PF_n(\vec{u})$, we know $\PF_n(\vec{u})$ is permutation-invariant, thus we only need to show that $\mathbf{x} \in \PS(\vec{y};z)$ where $\vec{y}=(a,1^{n-1})$. First, $x_1\leq z+n-1$ hence $C_1$ parks. We claim that all the remaining cars can park with the preference sequence $\mathbf{x}$. Assume not, then there is a car failing to park and there are empty spots left unoccupied. Let $k$ be such an empty spot. 
 Note that all the remaining cars are of length 1. A car $C_i$ ($i \geq 2$) cannot park if and only if all the spots from $x_i$ to the end are occupied when $C_i$ enters. 
 Since $x_i \leq z+n-1$, it follows that $z \leq k \leq z+n-1$. From $\mathbf{x} \in \PF_n(\vec{u})$ and condition \eqref{eq2}, we have 
 \[
 \#\{j:x_j \leq k\}\geq k-(z-1).
 \]
 It means that there are at least $k-(z-1)$ cars that attempted to park in the spots $[z, k]$, which has exactly $k-(z-1)$ spots. Therefore the spot $k$ must be checked and cannot be left empty, a contradiction. 
\end{proof}
Again using the counting formulas for $\vec{u}$-parking functions, we have
\begin{corollary}
$\# \PS_{inv}(y;z)= z(n+z)^{n-1}$.
\end{corollary}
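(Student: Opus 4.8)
The plan is to obtain the enumeration directly from Proposition~\ref{lemV}, which already identifies the set $\PS_{inv}((a,1^{n-1});z)$ with the set $\PF_n(\vec{u})$ of $\vec{u}$-parking functions for the single boundary vector $\vec{u}=(z,z+1,\dots,z+n-1)$. Because these two sets are literally equal, their cardinalities agree, so the whole task collapses to counting $\#\PF_n(\vec{u})$ for this one $\vec{u}$. No further analysis of the parking dynamics is required; all the structural work has been done in establishing the set equality, and what remains is purely an appeal to a known enumeration.

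The key observation is that $\vec{u}=(z,z+1,\dots,z+n-1)$ has $i$-th entry $u_i = z + (i-1)$, i.e. it is of the arithmetic form $u_i = a + b(i-1)$ with $a=z$ and $b=1$. This is exactly the linear family recalled in the introduction, for which the number of $\vec{u}$-parking functions is $a(a+bn)^{n-1}$ (see \cite{goncpoly}); the same count appears as \cite[Corollary~5.5]{goncpoly} and was already used in the proof of Corollary~\ref{Constant-count}. Substituting $a=z$, $b=1$ yields $z(z+n)^{n-1}$, which is the asserted formula.

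Concretely, I would argue as follows: by Proposition~\ref{lemV}, $\#\PS_{inv}((a,1^{n-1});z) = \#\PF_n(\vec{u})$ with $\vec{u}=(z,z+1,\dots,z+n-1)$; then invoking the enumeration of $\vec{u}$-parking functions for arithmetic boundaries gives $\#\PF_n(\vec{u}) = z(z+n)^{n-1}$. The only point deserving a moment's attention is matching the indexing convention of the cited formula, namely that it normalizes the boundary as $u_i = a + b(i-1)$ starting from $i=1$ (so $u_1 = z$); since our $\vec{u}$ does satisfy $u_1=z$, the substitution applies verbatim. I therefore expect no genuine obstacle in this corollary: the substantive content lives entirely in Proposition~\ref{lemV}, and the proof here amounts to recognizing the boundary as arithmetic and reading off the corresponding value.
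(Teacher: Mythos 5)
Your proposal is correct and matches the paper's own argument exactly: the paper also derives the count by combining Proposition~\ref{lemV} with the standard enumeration $a(a+bn)^{n-1}$ of $\vec{u}$-parking functions for the arithmetic boundary $u_i=z+(i-1)$, giving $z(z+n)^{n-1}$. Your remark about checking the indexing convention $u_1=z$ is a sound extra precaution but introduces nothing beyond what the paper does implicitly.
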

\vanish{
\begin{proof}
 Follows from Proposition \ref{lemV}.
\end{proof}
}

\section{Invariance for the Set of Car Lengths} 

\subsection{Strong parking sequences} 
In this section, we study another type of invariance. Given a fixed set of cars of various lengths and a one-way street whose length is equal to the sum of the car lengths and a trailer's  length $z-1$, we consider the parking sequences for which all $n$ cars can park on the street irrespective of the order in which they enter the street. 
Denote by $\mathfrak{S}_n$ the set of all permutations on $n$ letters. For a vector $\vec{y}$ and $\sigma \text{ in } \mathfrak{S}_n$, let  $\sigma(\vec{y})=(y_{\sigma(1)},...\,,y_{\sigma(n)})$.
\begin{definition}
Let $\mathbf{c}=(c_1, . . . , c_n)$ and $\vec{y} = (y_1 , . . . , y_n)$. Then, $\mathbf{c}$ is  a \emph{strong parking sequence for $(\vec{y};z)$} if and only if $$\mathbf{c} \in \bigcap_{\sigma \in \mathfrak{S}_n} \PS(\sigma(\vec{y});z).$$ 
We will denote the set of all strong parking sequences for $(\vec{y};z)$ by  $\SPS\{\vec{y};z\}$, or equivalently, 
$\SPS\{ \vec{y}_{inc};z\}$. 
\end{definition}

\begin{example}
For the case $n=2$, let $a,b\in \mathbb{Z}_+$ with $a<b$. 
It is easy to see that 
\begin{eqnarray*}
\PS((a,b);z) 
=[z] \times [z+a] \cup \{(c_1, c_2):  c_1=z+b, 1 \leq c_2 \leq z\}, \\
\PS((b,a);z) =[z] \times [z+b]  \cup  \{(c_1,c_2):c_1=z+a, 1 \leq c_2 \leq z\}.
\end{eqnarray*}
This gives $$\SPS\{(a,b);z\}=\PS((a,b);z)\cap \PS((b,a);z) =[z] \times [z+a].$$ 
Note that $\SPS\{(a,b);z\}$ is exactly the set of all preferences $\mathbf{c} \in \PS((a,b);z)$ that yields the final parking configuration $T, C_1, C_2$. 
\end{example}

\vanish{
Next, we will consider the case where $n\geq 3$.
As an example, the preference sequence $\mathbf{c} = (1,2,2,1) \in \SPS\{1,1,2,2\}$ since $\mathbf{c} \in \PS(\vec{y'})$ for all permutations $\sigma(\vec{y'})$ of $\vec{y}=(1,1,2,2)$. On the other hand, even though $\mathbf{b} = (1,4,1,1) \in \PS(1,1,2,2)$, $\mathbf{b} \not\in \SPS\{1,1,2,2\}$ since $\mathbf{b} \not\in \PS(2,1,2,1)$.}


 By Ehrenborg and Happ's result \eqref{parktrailer1}, we know that if $\vec{y} = (k^n)$, then $$\# \SPS\{\vec{y};z\} =\# \PS(\vec{y};z)= z\cdot \prod_{i=1}^{n-1} (z+ik+n-i).$$
In the following we consider the case that $\vec{y}$ does not have constant entries. 

\vanish{
Recall that the final parking configuration of a parking sequence is the final relative arrangement of all cars on the street after they are done parking.}
\begin{definition}
We say that $\mathbf{c} \in \PS(\vec{y};z)$ parks $\vec{y}$ in the \emph{standard order} if the final parking configuration of $\mathbf{c}$ is given by $T,C_1,C_2,...,C_n$.
\end{definition}

\vanish{
For example, in the case where $(\vec{y};z)=(2,3,1,2,1,4;3)$, the standard order is shown in Figure \ref{standardorder}.

\tikzstyle{vertex}=[rectangle,fill=black!15,minimum size=10pt,inner sep=0pt]
\begin{figure}[ht]
\centering
\def\boundb{(-2,0) grid (13,1)}
\begin{tikzpicture}[scale=0.75, auto,swap]
    \draw \boundb;
    \fill[black!55] (-1.9,0.1) rectangle ++(1.8,0.8);
    \fill[black!20] (0.1,0.1) rectangle ++(1.8,0.8);
    \fill[black!20] (2.1,0.1) rectangle ++(2.8,0.8);
    \fill[black!20] (5.1,0.1) rectangle ++(0.8,0.8);
    \fill[black!20]
    By Ehrenborg and Happ's result \eqref{parktrailer1}, it is clear that if $\vec{y} = (s,s, . . . , s)$, then $$\# \SPS\{\vec{y};z\} =\# \PS(\vec{y};z)= z\cdot \prod_{i=1}^{n-1} (z+is+n-i).$$
In the following we consider the case that $\vec{y}$ does not have identical entries. (6.1,0.1) rectangle ++(1.8,0.8);
    \fill[black!20] (8.1,0.1) rectangle ++(0.8,0.8);
    \fill[black!20] (9.1,0.1) rectangle ++(3.8,0.8);
   
    \foreach \pos/\name in {{(-1,0.5)/T},{(1.0,0.5)/C_{1}}, {(3.5,0.5)/C_{2}},{(5.5,0.5)/C_{3}},{(7,0.5)/C_{4}},{(8.5,0.5)/C_{5}},{(11,0.5)/C_{6}}}
        \node (\name) at \pos {$\name$};
    \foreach \pos/\name in {{(-1.5,-0.5)/1},{(-0.5,-0.5)/2},{(0.5,-0.5)/3},{(1.5,-0.5)/4},{(2.5,-0.5)/5},{(3.5,-0.5)/6},{(4.5,-0.5)/7},{(5.5,-0.5)/8},{(6.5,-0.5)/9},{(7.5,-0.5)/10},{(8.5,-0.5)/11},{(9.5,-0.5)/12},{(10.5,-0.5)/13},{(11.5,-0.5)/14},{(12.5,-0.5)/15}}
        \node (\name) at \pos {$\name$};
\end{tikzpicture}
\caption{Standard order for $\vec{y}=(2,3,1,2,1,4)$ and $z=3$.}
\label{standardorder}
\end{figure}
} 

The following lemma is easily proved by induction.  
\begin{lemma}\label{simple}
Let $\mathbf{c}=(c_1,c_2,...,c_n) \in \PS(\vec{y};z)$. Then, $\mathbf{c}$ parks $\vec{y}$ in the standard order if and only if \[
c_k \leq z+y_1+\cdots +y_{k-1} \text{ for all } k \in [n].
\]
\end{lemma}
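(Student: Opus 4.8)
The plan is to run an induction on the cars, mirroring the proof of Proposition~\ref{PSchar2} but dropping the monotonicity hypothesis; the only extra ingredient needed is the obvious fact that once a car has parked it never moves, so that at the moment $C_k$ enters the street the set of occupied spots is exactly the set of final positions of $C_1,\dots,C_{k-1}$ (the later cars not having entered yet).

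For the ``if'' direction I would assume $c_k \le z + \sum_{i=1}^{k-1} y_i$ for all $k$ and show by induction on $k$ that $C_k$ parks on the block $[\,z+\sum_{i<k}y_i,\ z+\sum_{i\le k} y_i - 1\,]$. The base case $k=1$ is just $c_1 \le z$, which (since the trailer occupies $[1,z-1]$) forces $C_1$ to the first empty spot $z$, with no gap. For the inductive step, the hypothesis gives that $C_1,\dots,C_{k-1}$ fill the contiguous block $[z,\, z+\sum_{i<k}y_i-1]$ and everything beyond it is empty; since $c_k \le z+\sum_{i<k}y_i$, the first empty spot $\ge c_k$ is exactly $z+\sum_{i<k}y_i$, so $C_k$ parks immediately after $C_{k-1}$ without leaving a gap. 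Taking $k=n$ yields the standard configuration $T,C_1,\dots,C_n$.

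For the ``only if'' direction I would argue directly. Assuming the final configuration is $T,C_1,\dots,C_n$, car $C_k$'s final block starts at spot $z+\sum_{i<k}y_i$. When $C_k$ enters, cars $C_1,\dots,C_{k-1}$ already occupy their final positions (cars never move once parked) while $C_{k+1},\dots,C_n$ have not yet entered; hence the occupied spots are precisely $[z,\, z+\sum_{i<k}y_i-1]$ and all later spots are empty. Consequently the first empty spot $\ge c_k$ is $z+\sum_{i<k}y_i$, which is possible only if $c_k \le z+\sum_{i<k}y_i$, since otherwise the first empty spot $\ge c_k$ would be at least $c_k > z+\sum_{i<k}y_i$ and $C_k$ could not park at its claimed position.

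The one point requiring care---and the step I would flag as the main (albeit minor) obstacle---is the bookkeeping of the occupied set at the instant $C_k$ enters in the forward direction: one must invoke both that the earlier cars are frozen at their standard positions and that the later cars are still absent, so that the occupied region is a clean prefix block $[z,\, z+\sum_{i<k}y_i-1]$ and the ``first empty spot $\ge c_k$'' computation is unambiguous. Everything else is the same elementary greedy-parking reasoning already carried out in Proposition~\ref{PSchar2}.
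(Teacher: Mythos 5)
Your proof is correct and matches the paper's intended argument: the paper dispatches this lemma with the single remark that it ``is easily proved by induction,'' and your induction on $k$ (tracking that the occupied region is the prefix block $[1,\,z+\sum_{i<k}y_i-1]$ when $C_k$ enters, in both directions) is exactly the routine verification being alluded to. No gaps.
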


The following result characterizes strong parking sequences for any set of $n \geq 2$ cars with a given  length vector  $\{y_1,y_2,...,y_n\}$ and a trailer $T$ of length $z-1$. 

\vanish{We denote  $(y_{(1)},y_{(2)}, . . . , y_{(n)})$ by $\vec{y}_{inc}$ and let $C_i$ represent a car with length $y_{(i)}$ for each $i=1,...,n$.
}
\begin{theorem}\label{lastmain}
Let $n \geq 2$. Assume that $\vec{y}=(y_1,...,y_n)$ is not a constant sequence. Then $\mathbf{c}$ is a strong parking sequence for $\{\vec{y};z\}$ if and only if $\mathbf{c}$ parks $\vec{y}_{inc}=(y_{(1)},y_{(2)}, . . . , y_{(n)})$ in the  standard order.
\end{theorem}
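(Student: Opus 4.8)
The plan is to prove both implications, reformulating ``standard order'' via Lemma~\ref{simple}: applied to the length vector $\vec{y}_{inc}$, it says that $\mathbf{c}$ parks $\vec{y}_{inc}$ in standard order exactly when $c_k \le z + \sum_{i=1}^{k-1} y_{(i)}$ for every $k\in[n]$, where the partial sums run over the \emph{smallest} lengths. The numerical fact driving everything is that for any $\sigma\in\mathfrak{S}_n$ and any $k$, $\sum_{i=1}^{k-1} y_{\sigma(i)} \ge \sum_{i=1}^{k-1} y_{(i)}$, since the right-hand side is the sum of the $k-1$ smallest entries of $\vec{y}$.

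For sufficiency ($\Leftarrow$) suppose $\mathbf{c}$ parks $\vec{y}_{inc}$ in standard order, so $c_k \le z + \sum_{i<k} y_{(i)}$ for all $k$. Fix any $\sigma\in\mathfrak{S}_n$. By the observation above, $c_k \le z + \sum_{i<k} y_{(i)} \le z + \sum_{i<k} y_{\sigma(i)}$ for every $k$, so the induction underlying Lemma~\ref{simple} shows the cars park flush one after another; thus $\mathbf{c}$ parks $\sigma(\vec{y})$ in standard order, and in particular $\mathbf{c}\in\PS(\sigma(\vec{y});z)$. As $\sigma$ is arbitrary, $\mathbf{c}\in\SPS\{\vec{y};z\}$. (This in fact shows a strong parking sequence parks \emph{every} permutation in standard order.)

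For necessity ($\Rightarrow$) I would argue the contrapositive: assuming $\mathbf{c}$ does not park $\vec{y}_{inc}$ in standard order, I exhibit a permutation of $\vec{y}$ that $\mathbf{c}$ fails to park. By Lemma~\ref{simple} choose the smallest $k$ with $c_k > z + \sum_{i<k} y_{(i)}$ and set $z':=z+\sum_{i<k}y_{(i)}$. Take any permutation placing the $k-1$ smallest lengths on $C_1,\dots,C_{k-1}$; minimality of $k$ gives $c_j\le z+\sum_{i<j}y_{(i)}$ for $j<k$, so these cars park flush in $[z,z'-1]$, whereupon $C_k$ (preference exceeding $z'$) parks beyond spot $z'$ and opens a gap starting at $z'$. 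The residual parking of $C_k,\dots,C_n$ into $[z',\,\cdot\,]$ is itself a parking problem with trailer length $z'-1$, length multiset $\{y_{(k)},\dots,y_{(n)}\}$, and leading preference $c_k>z'$. I would then induct on $n$: when these residual lengths are not all equal, the first Lemma~\ref{simple} inequality $c_k\le z'$ fails, so by the inductive hypothesis (applied to the smaller instance, which has size $n-k+1<n$ when $k\ge 2$) the residual sequence is not strong, yielding a length assignment on $C_k,\dots,C_n$ under which they fail; splicing this with the flush block $C_1,\dots,C_{k-1}$ gives a failing permutation of $\vec{y}$.

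The main obstacle is exactly this gap-unfillability, and it is where the non-constant hypothesis is essential: for constant $\vec{y}$ one can have $c_1>z$ yet $\mathbf{c}$ strong (e.g. $\mathbf{c}=(2,1)$, $\vec{y}=(1,1)$, $z=1$), so no failing permutation exists. The two boundary cases that do not reduce---the residual lengths all equal to some $v$, and the case $k=1$ (no flush block)---must be handled directly using $y_{(1)}<y_{(n)}$. In the all-equal case, the fact that the flush permutation \emph{does} park forces $c_k-z'$ to be a multiple of $v$; transposing the strictly smaller length $y_{(1)}$ onto $C_k$ then shifts the gap to size $(m-1)v+y_{(1)}\equiv y_{(1)}\not\equiv 0\pmod v$ (while $C_1,\dots,C_{k-1}$ still park flush, since their preferences are small enough), which the length-$v$ residual cars cannot tile. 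For $k=1$ a companion device places the largest length $y_{(n)}$ on a minimum-preference car (which exists since $c_{(1)}\le z$ by Corollary~\ref{PScharcor}) to collide with $C_1$ at the opened gap. Checking that these explicit permutations genuinely fail, \emph{uniformly in the size of the gap}, is the delicate technical heart of the argument.
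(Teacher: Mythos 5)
Your sufficiency argument and the skeleton of your necessity argument are sound, and the necessity route is genuinely different from the paper's: you induct on $n$ by peeling off a flush block $C_1,\dots,C_{k-1}$ of the $k-1$ smallest lengths and reducing to a residual parking problem with trailer parameter $z'=z+\sum_{i<k}y_{(i)}$, whereas the paper argues directly by locating the car $C_t$ that parks immediately to the left of $C_{k_1}$ in the successful configuration $\mathcal{F}$ and performing one of three explicit transpositions ($k_1\leftrightarrow t$, $t\leftrightarrow n$, or $1\leftrightarrow k_1$) according to whether $y_{(k_1)}<y_{(t)}$, $y_{(k_1)}=y_{(t)}<y_{(n)}$, or $y_{(k_1)}=\cdots=y_{(n)}$. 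Your splicing step and your all-equal-residual case (the gap of size $(m-1)v+y_{(1)}\not\equiv 0 \pmod v$) are correct and essentially reproduce the paper's Case 2b.

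However, your $k=1$ device is a genuine gap, and the example $\vec{y}=(1,2)$, $z=1$, $\mathbf{c}=(3,1)$ shows it fails. Here $c_1=3>z$, the unique minimum-preference car is $C_2$ with $c_2=1$, and assigning it the largest length $y_{(2)}=2$ just reproduces $\vec{y}_{inc}=(1,2)$ itself, under which $\mathbf{c}$ parks successfully ($C_1$ at spot $3$, $C_2$ at spots $1$--$2$); no collision occurs because the long car fits exactly into the opened gap. The same phenomenon occurs for $\vec{y}=(1,1,3)$, $\mathbf{c}=(5,1,2)$: putting length $3$ on the min-preference car $C_2$ yields $(1,3,1)$, which parks. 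The witness that actually breaks these examples is the permutation that lengthens $C_1$ itself (e.g.\ $(2,1)$ in the first example, so that $C_1$ overflows the street), which is exactly what the paper's construction does: when $k_1=1$ it swaps the length of $C_1$ with that of the longer car $C_t$ parked directly to its left in $\mathcal{F}$, so that after the swap a residue of $y_{(t)}-y_{(1)}$ spots is left that no remaining car (all of length at least $y_{(t)}$) can fill. To repair your proof you need to replace the ``largest length on a minimum-preference car'' device with a construction of this kind, and verify it uniformly in the gap size $c_1-z$; as written, that case of your argument does not go through.
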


\begin{proof}
Suppose $\mathbf{c}$ parks $\vec{y}_{inc}$ in the standard order. We need to check that $\mathbf{c}$ is a parking sequence for $(\sigma(\vec{y});z)$ for every $\sigma \in \mathfrak{S}_n$. This follows from Lemma \ref{simple} and the fact that $y_{(1)}+y_{(2)}+\cdots +y_{(i)} \leq y_{\sigma(1)}+y_{\sigma(2)}+\cdots+y_{\sigma(i)}$ for any $\sigma \in \mathfrak{S}_n$ and $i \in [n]$.

Conversely, 
let $\mathbf{c}$ be a parking sequence for $(\vec{y}_{inc};z)$ that does not parks $\vec{y}_{inc}$ in 
the standard order.
We will construct a permutation $\sigma$ such that for a sequence of cars with length vector $\sigma(\vec{y}_{inc})$, $\mathbf{c} \not\in \PS(\sigma(\vec{y}_{inc});z)$. 
In the following, let $C_i$ represent a car of length $y_{(i)}$, as listed in the table below. Let $\mathcal{F}$ be the final parking configuration of $\mathbf{c}$ when we park the cars $C_1, \dots, C_n$. In $\mathbf{c}$, let $k_1$ be the minimal index $k$ such that  $c_k > z+y_{(1)}+y_{(2)}+\cdots+y_{(k-1)}$. 
Then in $\mathcal{F}$ the trailer is followed by $C_1, \dots, C_{k_1-1}$ with no gap, but there is a gap between $C_{k_1-1}$ and $C_{k_1}$.  Let  $C_{t}$  be the last car that parks right before  $C_{k_1}$ in $\mathcal{F}$.  Clearly $t > k_1$.

\noindent \begin{center}
\begin{tabular}{|c|c|c|c|c|c|c|c|c|c|}
\hline 
Car & $C_{1}$ & $C_{2}$ & $\cdots$ & $C_{k_1}$ & $\cdots$ & $C_t$ & $\cdots$ & $C_{n-1}$ & $C_n$  \tabularnewline

Car Length & $y_{(1)}$ & $y_{(2)}$ & $\cdots$ & $y_{(k_1)}$ & $\cdots$ & $y_{(t)}$ & $\cdots$ & $y_{(n-1)}$ & $y_{(n)}$ \tabularnewline
\hline 
\end{tabular}
\par\end{center}


\begin{enumerate}
    \item[\textbf{Case 1}.] {Assume $y_{(k_1)} < y_{(t)}$}. Let $\sigma_1$ be the transposition $(k_1 \longleftrightarrow t)$.
    For each $i \in [n]$ let $D_i$ represent a car of length $y_{\sigma_1(i)}$, as shown below. 
   
\noindent \begin{center}
\begin{tabular}{|c|c|c|c|c|c|c|c|c|c|c|}
\hline 
\multirow{2}{1.2em}{$\sigma_1$} & Car & $D_1$ & $D_2$ & $\cdots$ & $D_{k_1}$ & $\cdots$ & $D_t$ & $\cdots$ & $D_{n-1}$ & $D_n$ \\

& Car Length & $y_{(1)}$ & $y_{(2)}$ & $\cdots$ & $y_{(t)}$ & $\cdots$ & $y_{(k_1)}$ & $\cdots$ & $y_{(n-1)}$ & $y_{(n)}$ \\
\hline 
\end{tabular}
\par\end{center}

 We  park cars $D_1,...,D_n$ using the preference sequence $\mathbf{c}$. 
    If $\mathbf{c} \in \PS(\sigma_1(\vec{y});z)$, then $D_1,..., D_t$ are able to park  and
    \begin{enumerate}
        \item $D_1, D_2,...,D_{k_1-1}$ have the same lengths and preferences as $C_1, C_2,...,C_{k_1-1}$. Hence they park in order right after the trailer with no gaps.
        \item $D_{k_1}$ is longer than $C_{k_1}$ and occupies spots in $[c_{k_1}, c_{k_1}+y_{(t)}-1]$.
        \item Any car $D_i$ for $i \in \{k_1+1,...,t-1\}$ has the same preference as $C_i$ so it parks either before $D_{k_1}$ and in the same spots as $C_i$ in $\mathcal{F}$, or parks after $D_{k_1}$.
        \item $D_t$ takes the first $y_{(k_1)}$ spots of the ones occupied by $C_t$ in $\mathcal{F}$.
    \end{enumerate}
    After parking $D_1,...,D_t$, there are $y_{(t)}-y_{(k_1)}$ unused spots between cars $D_{t}$ and $D_{k_1}$. Any car trying to park after $D_t$ has length $\geq y_{(t)}>y_{(t)}-y_{(k_1)}$. So the spots between $D_t$ and $D_{k_1}$  cannot be filled and hence $\mathbf{c} \not\in \PS(\sigma_1(\vec{y}_{inc}), z)$.
    
    \item[\textbf{Case 2}.] {Assume $y_{(k_1)} = y_{(t)}$}. Then, since $\vec{y}_{inc}$ is not a constant sequence, either $y_{(1)} < y_{(k_1)}$ or $y_{(t)} < y_{(n)}$. 

    \begin{itemize}
    \item[\textit{Case 2a:}] Assume $y_{(t)} < y_{(n)}$. Let $\sigma_2$ be the 
    transposition $(t \longleftrightarrow n)$ and 
    $E_i$ be a car of length $\sigma_2(i)$ for each $i \in [n]$. 
   
\noindent \begin{center}
\begin{tabular}{|c|c|c|c|c|c|c|c|c|c|c|}
\hline 
\multirow{2}{1.2em}{$\sigma_2$} & Car & $E_1$ & $E_2$ & $\cdots$ & $E_{k_1}$ & $\cdots$ & $E_t$ & $\cdots$ & $E_{n-1}$ & $E_n$  \tabularnewline
& Car Length & $y_{(1)}$ & $y_{(2)}$ & $\cdots$ & $y_{(k_1)}$ & $\cdots$ & $y_{(n)}$ & $\cdots$ & $y_{(n-1)}$ & $y_{(t)}$ \tabularnewline
\hline 
\end{tabular}
\end{center}

We park  cars $E_1, \dots, E_n$ using the preference sequence $\mathbf{c}$. The cars $E_1,...,E_{t-1}$ take the same spots as $C_1, \dots, C_{t-1}$ in $\mathcal{F}$. 
 Next, car $E_t$ tries to park in the spots $C_t$ occupies, at the interval $[c_{k_1}-y_{(t)}, c_{k_1}-1]$, where the spot $c_{k_1}$ is already occupied by $E_{k_1}$. But $E_t$ has length $y_{(n)} > y_{(t)}$  and hence cannot fit.  
  Therefore, $\mathbf{c} \not\in\PS(\sigma_2(\vec{y}_{inc});z)$.

      \item[\textit{Case 2b:}]  If  $y_{(k_1)} = ... = y_{(t)} = ... =y_{(n)}=b$, then we must have 
      $k_1 >1$ and $y_{(1)}< y_{(k_1)}$. 
      In the final configuration $\mathcal{F}$, at the time car $C_{k_1}$ is parked, the lengths of all the intervals of consecutive empty spots left are multiples of $b$.  
  Let
      $\sigma_3 $ be the  transposition $(1 \longleftrightarrow k_1)$ and 
    $F_i$ be a car of length $\sigma_3(i)$ for each $i \in [n]$. 
      
 \noindent \begin{center}     
\begin{tabular}{|c|c|c|c|c|c|c|c|c|c|c|}
\hline 
\multirow{2}{1.2em}{$\sigma_3$} & Car & $F_1$ & $F_2$ & $\cdots$ & $F_{k_1}$ & $\cdots$ & $F_t$ & $\cdots$ & $F_{n-1}$ & $F_n$  \tabularnewline
& Car Length & $y_{(k_1)}$ & $y_{(2)}$ & $\cdots$ & $y_{(1)}$ & $\cdots$ & $y_{(t)}$ & $\cdots$ & $y_{(n-1)}$ & $y_{(n)}$ \tabularnewline
\hline 
\end{tabular}
\par\end{center}

  We park cars $F_1, \dots, F_n$ using the preference sequence $\mathbf{c}$.
  The cars $F_1, \dots, F_{k_1-1}$ will take the spaces right after the trailer. The total length of $F_1, \dots, F_{k_1-1}$ is no more than the total length of $C_1, \dots, C_{k_1-1} \text{, and }C_t$, since 
  $y_{(1)}+y_{(2)}+ \cdots +y_{(k_1-1)}+y_{(t)}  > y_{(2)} +\cdots + y_{(k_1-1)} + y_{(k_1)}$. So 
  $F_{k_1}$ will park at the spot starting at $c_k$, just as $C_{k_1}$. But, as $y_{(1)} < y_{(k_1)}$, after $F_{k_1}$ is parked, the available space after  $F_{k_1}$ is nonempty and not a multiple of $b$, while all the remaining cars  are of length $b$. Hence, it is not possible to park all of them and $\mathbf{c} \not \in \PS(\sigma_3(\vec{y}_{inc});z)$.

    \end{itemize} 
\end{enumerate}

\vanish{ 
\noindent \textbf{The case of $k_1 = 1$}.  

Then, there are 2 possible cases: 
\begin{enumerate}
    \item[\textbf{Case 1$'$:}] Suppose $y_{(1)}<y_{(t)} \leq y_{(n)}$. Consider the length vector $\sigma_4(\vec{y}_{inc})$ given by:
    \noindent \begin{center}
\begin{tabular}{|c|c|c|c|c|c|c|c|c|}
\hline 
\multirow{2}{1.2em}{$\sigma_4$} & Car & $G_{1}$ & $G_{2}$ & $\cdots$ & $G_t$ & $\cdots$ & $G_{n-1}$ & $G_n$  \tabularnewline
  
& Car Length & $y_{(t)}$ & $y_{(2)}$ & $\cdots$ & $y_{(1)}$ & $\cdots$ & $y_{(n-1)}$ & $y_{(n)}$ \tabularnewline
\hline 
\end{tabular}
\par\end{center}
    which is the transposition $(1 \longleftrightarrow t)$. 
     If $\mathbf{c} \in \PS(\sigma_1(\vec{y});z)$, then $G_1,..., G_t$ can be parked and
    \begin{enumerate}
        \item $G_{1}$ is longer than $C_{1}$ and occupies spots in $[c_{1}, c_{1}+y_{(t)}-1]$
        \item Any car $G_i$ for $i \in \{2,...,t-1\}$ has the same preference as $C_i$ so it parks either before $G_{1}$ and in the same spot as $C_i$ in $\mathcal{F}$, or parks after $G_{1}$.
        \item $G_t$ takes the first $y_{1}$ spots of the ones occupied by $C_t$ in $\mathcal{F}$.
    \end{enumerate}
    After parking $G_1,...,G_t$, any car trying to park has length $\geq y_{(t)}$, hence cannot park between $G_t$ and $G_1$ since there are $y_{(t)}-y_{(1)}$ unused spots between them, where $y_{(t)}-y_{(1)}<y_{(t)}$.
    \item[\textbf{Case 2$'$:}] Suppose $y_{(1)} = y_{(t)} < y_{(n)}$. Consider again the length vector $\sigma_2(\vec{y}_{inc})$. For the reasons earlier stated, $E_t$ cannot park in the spots right before  $E_1$ in the preference $\mathbf{c}$. So, $\mathbf{c} \not\in \PS(\sigma_2(\vec{y}_{inc}), z)$.
\end{enumerate}
Thus, $\mathbf{c} \not\in \PS(\sigma_4(\vec{y}_{inc}), z)$ and we are done in the case where $k_1= 1$.
Hence the claim.} 
\end{proof}

\noindent Combining Lemma~\ref{simple} and Theorem~\ref{lastmain}, we obtain the following counting formula. 
\begin{corollary}\label{corolastmain}
Let $z \in \mathbb{Z}_+$ and $\vec{y}=(y_1, y_2, \dots, y_n) \in \mathbb{Z}_+^n$. 
If  $\vec{y} \neq (s^n)$ for any integer $s$, then
$$\# \SPS \{\vec{y};z\} =  z\cdot
\prod_{i=1}^{n-1} (z+y_{(1)}+ \cdots + y_{(i)}).
$$
where $y_{(1)} \leq y_{(2)}\leq \cdots \leq y_{(n)}$ is the order statistics of $\vec{y}$.
\end{corollary}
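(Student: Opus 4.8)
The plan is to combine the structural characterization of strong parking sequences from Theorem~\ref{lastmain} with the explicit inequalities of Lemma~\ref{simple}, and then to perform a direct product count. Since the hypothesis $\vec{y} \neq (s^n)$ is exactly what licenses the use of Theorem~\ref{lastmain}, the whole argument reduces to translating the ``standard order'' condition into coordinatewise bounds and multiplying.

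First I would invoke Theorem~\ref{lastmain}: because $\vec{y}$ is not constant, $\mathbf{c} \in \SPS\{\vec{y};z\}$ if and only if $\mathbf{c}$ parks $\vec{y}_{inc}$ in the standard order. Applying Lemma~\ref{simple} with the length vector $\vec{y}_{inc} = (y_{(1)}, \dots, y_{(n)})$, parking in the standard order is equivalent to the system
$$c_k \leq z + y_{(1)} + \cdots + y_{(k-1)}, \qquad k \in [n],$$
where the sum is empty (so the bound reads $c_1 \leq z$) when $k=1$. Hence $\SPS\{\vec{y};z\}$ is precisely the set of integer sequences $(c_1, \dots, c_n)$ with $1 \leq c_k \leq z + \sum_{i=1}^{k-1} y_{(i)}$ for every $k$.

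Next I would count. Each inequality constrains a single coordinate $c_k$ by a fixed upper bound, since the order statistics of $\vec{y}$ are determined once $\vec{y}$ is given; thus the choices for the coordinates are independent and the number of valid sequences is the product of the number of admissible values of each $c_k$, namely
$$\prod_{k=1}^{n}\left(z + \sum_{i=1}^{k-1} y_{(i)}\right) = z \cdot \prod_{k=2}^{n}\left(z + y_{(1)} + \cdots + y_{(k-1)}\right) = z \cdot \prod_{i=1}^{n-1}\left(z + y_{(1)} + \cdots + y_{(i)}\right),$$
which is the claimed formula after re-indexing $i = k-1$.

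The argument is essentially immediate once the two earlier results are in hand; the only point requiring genuine care is the logical bookkeeping in the first step. Specifically, one must observe that the listed inequalities are not merely necessary but also sufficient: a sequence satisfying them automatically parks all cars in the standard order, so there is no separate need to verify a priori that $\mathbf{c}$ lies in $\PS(\vec{y}_{inc};z)$ before applying Lemma~\ref{simple}. This is exactly the forward (inductive) direction that underlies Lemma~\ref{simple}. I do not anticipate any analytic obstacle here, as the count is a bare product over independent intervals; the constant case $\vec{y} = (s^n)$ is excluded by hypothesis and is in any event covered by the Ehrenborg--Happ formula quoted before the corollary.
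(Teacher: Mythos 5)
Your proof is correct and takes essentially the same route as the paper: the paper also obtains the count by combining Theorem~\ref{lastmain} with Lemma~\ref{simple} and multiplying the $z+y_{(1)}+\cdots+y_{(k-1)}$ admissible preferences for each car. Your observation that the inequalities are sufficient as well as necessary (so one need not separately verify $\mathbf{c}\in\PS(\vec{y}_{inc};z)$ before applying Lemma~\ref{simple}) is exactly the point of care the paper relies on implicitly.
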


\vanish{
\begin{proof}
By Theorem \ref{lastmain}, we need exactly all preference sequences  that park $\vec{y}_{inc}$ in standard order. That is, car $C_1$ of size $y_{(1)}$ is parked on spots $[z,z+y_{(1)}-1]$ and car $C_i$ of size $y_{(i)}$ is parked on the street in spots $[z+y_{(1)}+\cdots+y_{(i-1)}, z+y_{(1)}+\cdots+y_{(i)}-1]$ (for each $2\leq i \leq n$).
Clearly, for car $C_{1}$, there are $(z-1)+1=z$ possible preferences. For car $C_{i}$, where $2 \leq i \leq n$, 
the number of possible preferences is $$ (z-1)+y_{(1)}+y_{(2)}+\cdots+y_{(i-1)}+1=z+y_{(1)}+y_{(2)}+\cdots+y_{(i-1)}.$$
Hence, total number of possible preferences that yield the desired configuration is: $$z(z+y_{(1)})(z+y_{(1)}+y_{(2)})\cdots (z+y_{(1)}+y_{(2)}+\cdots +y_{(n-1)}).$$
\end{proof}
}

\subsection{Parking on a street with fixed length}
Suppose instead of fixing the set of cars, we fix the total street length. Let $\mathfrak{C}_{n}^{k}=\{\vec{y}=(n_1,n_2,...,n_k) \in \mathbb{Z}_+^k: n_1+n_2+ \cdots + n_k=n\}$ i.e. $\mathfrak{C}_{n}^k$ is the set of all compositions of $n$ into $k$ parts. We consider all possible sequences that can park any set of $k$ cars on the street of fixed length $z+n-1$.
More formally, we have the following definition.
\begin{definition}
Let $n, k, z \in \mathbb{Z}_+$ with $1 \leq k \leq n$.
Then, $\mathbf{c}=(c_1,...,c_k)$ is a \emph{$k$-strong parking sequence for $(n;z)$} if and only if $$\mathbf{c} \in \bigcap_{\vec{y} \in \mathfrak{C}_n^k} \SPS\{\vec{y};z\}.$$   
\end{definition}
\noindent We will denote the set of all $k$-strong parking sequences for $(n;z)$ by $\SPS_k(n;z)$ (or $\SPS_k(n)$ when $z=1$).
For example, when $n=3$, we have the following sets:
\begin{align*}
\SPS_1(n)&=\{(1)\} \\
\SPS_2(n)&=\{(1,1), (1,2)\} \\
\SPS_3(n)&=\{(1,1,1),(1,1,2),(1,1,3),(1,2,1),(1,2,2),(1,2,3),(1,3,1),(1,3,2),\\
         &(2,1,1),(2,1,2),(2,1,3),(2,2,1),(2,3,1),(3,1,1),(3,1,2),(3,2,1)\}
\end{align*}
We remark that in general, for any $n \in \mathbb{N}$, $\SPS_1(n)=\{(1)\}$ and $\SPS_n(n)=\PF_n$ where $\PF_n$ is the set of all parking functions of length $n$.
The following proposition helps characterize $\SPS_k(n;z)$ for any $1 \leq k \leq n$ and $z \in \mathbb{Z}_+$.
\begin{prop} \label{last}
Suppose $n, z \in \mathbb{Z}_+$ and let $\vec{y}_0=(1^{k-1},n-k+1)$ be the composition of $n$ into $k$ parts with $n_1=n_2=\cdots=n_{k-1}=1$ and $n_k=n-k+1$. Then, 
\begin{equation}
   \SPS_k(n;z)=\SPS\{\overbrace{1,1,...,1}^{k-1},n-k+1;z\} = \bigcap_{\sigma \in \mathfrak{S}_n} \PS(\sigma(\vec{y}_0);z).
\end{equation}
In other words, $\SPS_k(n;z)$ is the set of all sequences in $\PS(\vec{y}_0;z)$ that yield the standard order.
\end{prop}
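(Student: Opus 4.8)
The plan is to prove the two equalities in the proposition separately. The second equality, $\SPS\{1^{k-1}, n-k+1; z\} = \bigcap_{\sigma \in \mathfrak{S}_n} \PS(\sigma(\vec{y}_0); z)$, is nothing more than the definition of a strong parking sequence applied to the particular vector $\vec{y}_0 = (1^{k-1}, n-k+1)$, so it requires no argument. The real content is the first equality, $\SPS_k(n;z) = \SPS\{\vec{y}_0; z\}$. By definition $\SPS_k(n;z) = \bigcap_{\vec{y} \in \mathfrak{C}_n^k} \SPS\{\vec{y}; z\}$, an intersection over \emph{all} compositions of $n$ into $k$ parts, while the right-hand side is just one of these sets. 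Thus the inclusion $\SPS_k(n;z) \subseteq \SPS\{\vec{y}_0; z\}$ is immediate, since $\vec{y}_0 \in \mathfrak{C}_n^k$. The work lies in the reverse inclusion.

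For the reverse inclusion I would argue that $\SPS\{\vec{y}_0; z\}$ is the \emph{smallest} of all the sets $\SPS\{\vec{y}; z\}$, so that intersecting over all $\vec{y}$ adds no further constraints. The key observation is that $\vec{y}_0 = (1^{k-1}, n-k+1)$ is the composition whose increasing rearrangement $(\vec{y}_0)_{inc} = (1, 1, \dots, 1, n-k+1)$ has the lexicographically largest vector of partial sums among all compositions in $\mathfrak{C}_n^k$: since every part is at least $1$, the first $k-1$ partial sums of any $\vec{y}_{inc}$ satisfy $y_{(1)} + \cdots + y_{(i)} \geq i$, with equality for $\vec{y}_0$, so $\vec{y}_0$ minimizes each partial sum $\sum_{j=1}^i y_{(j)}$ for $i = 1, \dots, k-1$. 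Now invoke Theorem~\ref{lastmain}: since $\vec{y}_0$ is not constant (as $a > 1$ forces $n - k + 1 > 1$ whenever $k < n$; the boundary case $k = n$ gives $\vec{y}_0 = (1^n)$ and must be handled by the constant-vector formula separately), a sequence $\mathbf{c}$ lies in $\SPS\{\vec{y}_0; z\}$ exactly when it parks $(\vec{y}_0)_{inc}$ in the standard order, which by Lemma~\ref{simple} means $c_k \leq z + 1 + \cdots + 1 = z + (k-1)$ for the first $k-1$ cars and the obvious bound for the last. Because these partial-sum bounds are the tightest possible over all $\vec{y} \in \mathfrak{C}_n^k$, any $\mathbf{c}$ parking $(\vec{y}_0)_{inc}$ in standard order automatically parks every other $(\vec{y})_{inc}$ in standard order, hence lies in $\SPS\{\vec{y}; z\}$ for all $\vec{y}$, giving $\mathbf{c} \in \SPS_k(n;z)$.

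Concretely, to show $\SPS\{\vec{y}_0; z\} \subseteq \SPS\{\vec{y}; z\}$ for an arbitrary $\vec{y} \in \mathfrak{C}_n^k$, I would take $\mathbf{c} \in \SPS\{\vec{y}_0; z\}$, apply Theorem~\ref{lastmain} and Lemma~\ref{simple} to get $c_i \leq z + (i-1)$ for all $i \in [k]$, and then observe that $z + (i-1) \leq z + y_{(1)} + \cdots + y_{(i-1)}$ since each $y_{(j)} \geq 1$. By Lemma~\ref{simple} this shows $\mathbf{c}$ parks $\vec{y}_{inc}$ in standard order, so $\mathbf{c} \in \SPS\{\vec{y}; z\}$ by Theorem~\ref{lastmain} again. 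Taking the intersection over all $\vec{y}$ yields $\SPS\{\vec{y}_0; z\} \subseteq \SPS_k(n;z)$, completing the reverse inclusion. The final sentence of the statement, that $\SPS_k(n;z)$ consists of the sequences in $\PS(\vec{y}_0; z)$ yielding the standard order, then follows directly from Theorem~\ref{lastmain} applied to $\vec{y}_0$.

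\textbf{Main obstacle.} I expect the only genuine subtlety to be the degenerate case $k = n$, where $\vec{y}_0 = (1^n)$ \emph{is} constant and Theorem~\ref{lastmain} does not apply; here one must instead note directly that $\SPS\{(1^n); z\} = \PS((1^n); z) = \PF_n(\vec{u})$ for $\vec{u} = (z, z+1, \dots, z+n-1)$, which is already permutation-invariant, so the standard-order characterization still holds. Everything else is a short monotonicity argument on partial sums, so the proof is more about correctly citing Theorem~\ref{lastmain} and Lemma~\ref{simple} than about any hard combinatorial construction.
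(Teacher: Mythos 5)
Your proof is correct and follows essentially the same route as the paper, which likewise deduces the result from Lemma~\ref{simple} (via Theorem~\ref{lastmain}) together with the observation that the partial sums $i \leq n_1+\cdots+n_i$ of $\vec{y}_0=(1^{k-1},n-k+1)$ are minimal over all compositions in $\mathfrak{C}_n^k$. You are in fact more careful than the paper's one-line proof, in particular in flagging the degenerate case $k=n$ where $\vec{y}_0=(1^n)$ is constant and Theorem~\ref{lastmain} does not directly apply (a point the paper glosses over); your only blemish is the stray reference to ``$a>1$'' in that discussion, which should just read $n-k+1>1$.
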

\begin{proof}
The statement 
follows from Lemma \ref{simple} and the fact that for any $\vec{y}=(n_1,...,n_k)$ with $n_1+\cdots+n_k=n$, we have for each $i \in [k-1]$, $$\overbrace{1+1+\cdots+1}^{i}=i \leq n_1+\cdots+n_i.$$
\end{proof}
\begin{corollary}
\[ \# \SPS_k(n;z)= 
\begin{cases}
z^{(k)}, \text{   if  } k\neq n \\
z(n+z)^{n-1}, \text{   if  } k=n.
\end{cases}
\]
where $z^{(k)}=z(z+1)\cdots (z+k-1)$. In particular, when $z=1$,
$$ \# \SPS_k(n)= 
\begin{cases}
k!, \text{   if  } k\neq n \\
(n+1)^{n-1}, \text{   if  } k=n.
\end{cases}
$$
\end{corollary}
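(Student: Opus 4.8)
The plan is to combine the characterization from Proposition~\ref{last} with the counting formula in Corollary~\ref{corolastmain} for the case $k \neq n$, and with the known formula for classical parking functions for the case $k = n$. First I would handle the generic case $1 \leq k < n$. By Proposition~\ref{last}, $\SPS_k(n;z) = \SPS\{\vec{y}_0;z\}$ where $\vec{y}_0 = (1^{k-1}, n-k+1)$. Since $k < n$ forces $n - k + 1 \geq 2$, the vector $\vec{y}_0$ is \emph{not} a constant sequence (it has entries $1$ and $n-k+1 > 1$), so Corollary~\ref{corolastmain} applies directly. The order statistics of $\vec{y}_0$ are $y_{(1)} = \cdots = y_{(k-1)} = 1$ and $y_{(k)} = n-k+1$, giving the partial sums $y_{(1)} + \cdots + y_{(i)} = i$ for each $1 \leq i \leq k-1$.

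Substituting these partial sums into the product from Corollary~\ref{corolastmain}, I would compute
\begin{equation*}
\# \SPS_k(n;z) = z \cdot \prod_{i=1}^{k-1}(z + i) = z(z+1)(z+2)\cdots(z+k-1) = z^{(k)},
\end{equation*}
which matches the claimed rising factorial. (One must be slightly careful here: the product in Corollary~\ref{corolastmain} runs to $n-1$ for a length-$n$ vector, but the relevant vector $\vec{y}_0$ has length $k$, so the product runs to $k-1$; this bookkeeping about the number of cars being $k$ rather than $n$ is the one place to state explicitly.) For the boundary case $k = n$, the vector $\vec{y}_0 = (1^{n-1}, 1) = (1^n)$ \emph{is} constant, so Corollary~\ref{corolastmain} does not apply and I would instead invoke the remark following Corollary~\ref{Constant-count}: $\SPS_n(n;z) = \PS((1^n);z)$ is precisely the set of $\vec{u}$-parking functions for $\vec{u} = (z, z+1, \dots, z+n-1)$, counted by $z(n+z)^{n-1}$ via \cite[Corollary 5.5]{goncpoly}.

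The specialization to $z = 1$ is then immediate: $z^{(k)} = 1 \cdot 2 \cdots k = k!$ when $k \neq n$, and $z(n+z)^{n-1} = (n+1)^{n-1}$ when $k = n$, recovering the familiar count of classical parking functions. The main obstacle, though entirely routine, is the case distinction at $k = n$: the proof technique genuinely changes, since the "not constant" hypothesis of Corollary~\ref{corolastmain} fails exactly when $k = n$, and one must ensure the alternative counting route (classical/vector parking functions) is correctly cited rather than forcing the product formula. Everything else reduces to evaluating the telescoping-free product $\prod_{i=1}^{k-1}(z+i)$, which I would state without belaboring.
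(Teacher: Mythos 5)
Your proposal is correct and takes essentially the same route as the paper, whose entire proof is ``Follows from Proposition~\ref{last} and Corollary~\ref{corolastmain}''; you simply make explicit the substitution of the order statistics of $\vec{y}_0=(1^{k-1},n-k+1)$ and the separate treatment of $k=n$, where the constant vector $(1^n)$ falls outside the hypothesis of Corollary~\ref{corolastmain} and one falls back on the $\vec{u}$-parking-function count $z(n+z)^{n-1}$. (The only nitpick is the degenerate case $k=1$, where $\vec{y}_0=(n)$ \emph{is} a one-term constant sequence so your ``not constant'' claim technically fails, but the count $\#\SPS_1(n;z)=z=z^{(1)}$ is immediate there.)
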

\begin{proof}
Follows from Proposition \ref{last} and Corollary \ref{corolastmain}.
\end{proof}

\section{Closing Remarks}
In this paper, we studied increasing parking sequences and their connections  with lattice paths. We also studied permutation-invariant parking sequences and length-invariant parking sequences. More precisely, we characterized the permutation-invariant parking sequences for some special families of length vectors. While it may not be easy to find a general formula for all cases, a natural direction to go would be to investigate other special cases of car lengths. Furthermore, in the study of parking functions we encounter quite a number of other mathematical structures including trees, non-crossing partitions, hyperplane arrangements, polytopes etc. It will be interesting to investigate if there is anything that connects
other combinatorial structures to invariant  parking 
sequences.   Recently in \cite{ejc}, parking sequences 
were extended to the case in which one or more trailers are placed anywhere on the street alongside $n$ cars with length vector $\vec{y}=(1,1,...,1)$. 
A natural generalization is to consider a similar scenario where $\vec{y}$ is any length vector.    

\section*{Acknowledgements}
The authors are grateful for helpful conversations with Westin King which led to shorter proofs for Theorem \ref{3.3} and Lemma \ref{lemI}.


\begin{thebibliography}{9}
\bibitem{ejc}
A. Adeniran, S. Butler, G. Dorpalen-Barry, P. E. Harris, C. Hettle, Q. Liang, J. L. Martin and H. Nam, \textit{Enumerating parking completions using Join and Split}, Electronic Journal of Combinatorics (2020). To Appear.

\bibitem{parkcars} 
R. Ehrenborg and A. Happ, \textit{Parking cars of different sizes}, Amer. Math. Monthly 123 (2016), 1045--1048.

\bibitem{parktrailer} 
R. Ehrenborg and A. Happ,
\textit{Parking cars after a trailer}, Australasian Journal of Combinatorics, Volume 70(3) (2018), Pages 402--406.


\bibitem{Weiss}
A. G. Konheim and B. Weiss, \textit{An occupancy discipline and applications}, SIAM J. Appl. Math. 14
(1966), 1266--1274.

\bibitem{goncpoly}
J. P. S. Kung and C. Yan, \textit{Gon\v{c}arov polynomials and parking functions}, J. Combin. Theory Ser. A 102 (2003), No. 1, 16--37.
\bibitem{mohanty}
S. G. Mohanty, ``Lattice Path Counting and Applications'', in: Z.W. Birnbaum and E. Lukacs, \textit{Probability and Mathematical Statistics}, Academic Press, 1979.

\bibitem{pollak}
J. Riordan. \textit{Ballots and trees}, J. Combinatorial Theory, 6:408--411, 1969.

\bibitem{OEIS} 
N. J. A. Sloane. The On-line Encyclopedia of Integer Sequences. \url{https://oeis.org}.



\bibitem{yan2} 
C. Yan, \textit{On the Enumeration of Generalized Parking Functions}, Proceedings of the 31st Southeastern International Conference on Combinatorics, Graph Theory, and Computing, (Boca Raton, 2000). Congressus Numerantium, 147 (2000), 201--209.


\bibitem{yandiff} 
C. Yan, ``Parking Functions'', pp. 835-893 in: M. B\'{o}na (ed.), \textit{Handbook of Enumerative Combinatorics}, Discrete Math. Appl., Chapman and Hall/CRC: Boca Raton, FL, 2015.

\end{thebibliography}
\end{document}